\documentclass[pdftex,12pt,a4paper]{amsart}
\pdfcompresslevel=9
\pdfimageresolution=100
\usepackage[pdftex]{graphicx}
\DeclareGraphicsExtensions{.png,.pdf}
\DeclareGraphicsRule{.png}{png}{*}{}
\DeclareGraphicsRule{.pdf}{pdf}{*}{}
\usepackage{thumbpdf}
\usepackage{comment}
\usepackage[export]{adjustbox}
\usepackage{mathrsfs}
\graphicspath{{figs/}}
\usepackage[english]{babel}
\usepackage[utf8]{inputenc}
\usepackage{amssymb}
\usepackage{amsmath}
\usepackage{amsthm}
\usepackage{color}
\usepackage{hyperref}
\usepackage{orcidlink}
\usepackage{url}
\usepackage{bm}
\usepackage{enumerate}
\usepackage{tikz}
\usepackage{tikz-cd}
\usepackage{algorithmic}
\usepackage{algorithm}
\usepackage{longtable}
\usepackage{multirow}
\usepackage{microtype}
\usepackage{geometry}
\geometry{
a4paper,
left=40mm, 
right=40mm,
top=25mm, 
}

\setlength{\parskip}{0.5\baselineskip}


\newcommand{\al}{\alpha}
\newcommand{\be}{\beta}
\newcommand{\de}{\delta}
\newcommand{\ep}{\varepsilon}

\renewcommand\epsilon{\varepsilon}

\newcommand{\la}{\lambda}
\newcommand{\om}{\omega}

\newcommand{\te}{\theta}

\newcommand{\De}{\Delta}

\newcommand{\La}{\Lambda}

\newcommand{\Om}{\Omega}

\newcommand{\cA}{{\mathcal A}}

\newcommand{\cD}{{\mathcal D}}

\newcommand{\cI}{{\mathcal I}}

\newcommand{\cK}{{\mathcal{K}}}

\newcommand{\cO}{{\mathcal O}}

\newcommand{\cS}{{\mathcal S}}

\newcommand{\cW}{{\mathcal W}}

\newcommand{\RR}{{\mathbb R}}
\newcommand{\CC}{{\mathbb C}}

\newcommand{\TT}{{\mathbb T}}
\newcommand{\ZZ}{{\mathbb Z}}
\newcommand{\NN}{{\mathbb N}}


\newcommand\bi{\boldsymbol{i}}
\newcommand\bJ{\boldsymbol{J}}
\newcommand\bg{\boldsymbol{g}}
\newcommand{\abs}[1]{|{#1}|}

\newcommand{\norm}[1]{\|{#1}\|}

\newcommand{\aver}[1]{{\langle{#1}\rangle}}
\newcommand{\Max}[1]{{\max{\left\{#1\right\}}}}
\newcommand{\Min}[1]{{\min{\left\{#1\right\}}}}


\newcommand{\phiT}{\phi_T}
\newcommand{\DzphiT}{{\Dif_z}\phi_T}


\newcommand\ee{\mathrm{e}}

\newcommand\dif{{\rm d}}
\newcommand\Dif{{\rm D}}

\newcommand\pd{ \partial}

\newcommand\Ro{{R}}
\newcommand\Rom{R_\om}
\newcommand\Rw{{R_\om^\la}\hspace{1pt}}
\newcommand{\comp}{{\!\:\circ\!\:}}

\newcommand\ttop{{\!\top\!}}

\newcommand\K{K}

\newcommand\W{{W}}


\renewcommand\L{{L}}

\newcommand\N{{N}}

\renewcommand\S{\cS}

\renewcommand\P{{P}}
\newcommand\PT{{P^\ttop}}

\newcommand\G{G}

\newcommand\GL{G_\L}


\newcommand\barla{\bar\la}


\newcommand\Dela{{\De \la}}
\newcommand\DeW{{\De \W}}


\newcommand\bOm{\boldsymbol{\om}}

\newcommand\J{{J}}

\newcommand\OmO{{\Omega_{0}}}

\newcommand\OmL{{\Omega_{L}}}
















\newcommand{\mybox}[1]{\raisebox{0pt}[2.2ex][0ex]{$#1$}}
\newcommand{\tol}{\mathop{\rm tol}\nolimits}
\makeatletter
\newcommand{\subalign}[1]{%
  \vcenter{%
    \Let@ \restore@math@cr \default@tag
    \baselineskip\fontdimen10 \scriptfont\tw@
    \advance\baselineskip\fontdimen12 \scriptfont\tw@
    \lineskip\thr@@\fontdimen8 \scriptfont\thr@@
    \lineskiplimit\lineskip
    \ialign{\hfil$\m@th\scriptstyle##$&$\m@th\scriptstyle{}##$\hfil\crcr
      #1\crcr
    }%
  }%
}
\makeatother

\theoremstyle{definition}
\newtheorem{Def}{Definition}[section]
\newtheorem{remark}[Def]{Remark}

\theoremstyle{plain}
\newtheorem{proposition}{Proposition}[section]

\theoremstyle{remark}

\counterwithin{algorithm}{section}

\algsetup{linenodelimiter=)}

\begin{document}

\title[Computation of  invariant tori]{
Simultaneous computation of 
whiskered tori and their whiskers 
in Hamiltonian systems using flow maps
}

\author[\'A. Fernández-Mora]{{\'A}lvaro
Fern{\'a}ndez-Mora$^{\mbox{a}}$ \orcidlink{0000-0002-3863-0003} }
\address[a]{Departament de Matem\`atiques i Inform\`atica,
Universitat de Barcelona, Gran Via 585, 08007 Barcelona, Spain.}
\email{alvaro@maia.ub.es}

\author[\`A. Haro]{{\`A}lex Haro$^{\mbox{a,b}}$ \orcidlink{0000-0003-0377-8099}}
\address[b]{Centre de Recerca Matem\`atica, Edifici C, Campus
Bellaterra, 08193 Bellaterra, Spain}
\email{alex@maia.ub.es}

\author[R. de la Llave]{Rafael de la Llave$^{\mbox{c}}$ \orcidlink{0000-0002-0286-6233}}
\address[c]{School of Mathematics (emeritus), Georgia Institute of
Technology, 686 Cherry Street NW, Atlanta, GA 30332}
\email{rafael.delallave@math.gatech.edu}

\author[J. M. Mondelo]{Josep M. Mondelo$^{\mbox{d}}$\orcidlink{0000-0002-7135-0599}}
\address[d]{IEEEC-CERES \& Departament de Matem\`atiques, Universitat Aut\`onoma
de Barcelona, Av.~de l'Eix Central, Edifici C, 08193
Bellaterra (Barcelona), Spain.}
\email{josemaria.mondelo@uab.cat}

\thanks{This work has been supported by the European Union's
   Horizon 2020 research and innovation program under the Marie
   Sk{\l}odowska-Curie grant agreement 734557, the MCINN-AEI grants
   PID2021-125535NB-I00 and PID2020-118281GB-C31, the Catalan
   grant 2021 SGR 00113, the Severo Ochoa and Mar\'{\i}a de
   Maeztu Program for Centers and Units of Excellence in R\&D
   (CEX2020-001084-M) of the Spanish Research Agency, the
   Secretariat for Universities and Research of the Ministry of
   Business and Knowledge of the Government of Catalonia, and by
the European Social Fund.}

\date{}

\begin{abstract}
We consider autonomous Hamiltonian systems 
and present an algorithm to compute \emph{at  the same time} 
 partially
hyperbolic invariant tori (whiskered tori), as well as 
high-order expansions of their stable and unstable manifolds. 
Such whiskered tori have been shown to be important for transport
phenomena in phase space. For instance, by following their
invariant manifolds one could obtain zero-cost trajectories in
space mission design. We present in detail 
the case when the (un)stable directions are one-dimensional.  

The strategy to compute tori and their invariant manifolds
is based on the parameterization method. We formulate a
functional equation for a parameterization of both the torus  and  its 
whiskers
expressing that they are invariant.  This equation is naturally
discretized in Fourier-Taylor series or, equivalently, in a grid
of Taylor series. Using a return map, we are reduced to study 
functions of $n - 1$ variables where $n$ is the number of degrees
of freedom (the phase space is $2n$ dimensional). Then, we
implement a Newton-like method that converges quadratically.

They key advantage  of our approach is that, using geometric
identities coming from the Hamiltonian nature of the
problem, the algorithm has small storage requirements and
a low operation count per step which is highly efficient.
The simultaneous computation of  the torus and the whiskers 
improves  the efficiency and the stability of the algorithm.  

We present implementations and extensive numerical experiments in
the Circular Restricted Three Body Problem.
\end{abstract}

\maketitle

\section{Introduction}
Both in theory and applications, it is of great interest to
identify the asymptotic behavior of orbits of dynamical
systems---specially when the orbits approach invariant sets.  Such
asymptotic convergence  is described by the stable and unstable
manifolds of hyperbolic objects, e.g., fixed points, periodic
orbits, or whiskered tori. A torus is said to be whiskered if its
motion is a rotation and, associated to each of its points, there exists 
directions that contract or expand exponentially under the
linearization of the dynamics. These are the stable and unstable bundles.
More importantly (as a mathematical consequence of 
the existence of contracting bundles) there are stable and
unstable manifolds that contract or expand exponentially under
the full dynamics.  The intersection of these manifolds can lead
to interesting dynamics.

In modern times, \cite{Arnold64, ArnoldA68} showed that
one can generate large scale motions in phase space
by having chains of whiskered tori so that the unstable
manifold of one intersects the stable manifold of the next.
Such sequences are called transition chains. 
In \cite{Arnold63} it is conjectured with good evidence,
that transition chains happen often in celestial mechanics and
that this phenomena is in fact the main mechanism for diffusive
orbits. Even if other mechanisms for global instability have
been discovered, whiskered tori---specially those generated 
by resonances \cite{Treschev89}---provide very complicated maneuvers. 
Rigorous proofs of existence of whiskered tori 
and several or their properties (including the existence of 
(un)stable manifolds) were established 
in \cite{Graff74,Zehnder76}. 

The pioneering works \cite{ESA1,ESA2,ESA3,ESA4} marked
significant progress in the application of advanced numerical
dynamical systems techniques to celestial mechanics.
These books focus on libration point missions for which the
Restricted Three Body problem is the basic model. Dynamical
connections between hyperbolic objects serve as zero-fuel
trajectories.\footnote{Some small amount of fuel may be  needed to
make small corrections due to measurement errors or solar
winds and such.} For instance, the Genesis \cite{Genesis} and
Artemis \cite{Artemis} space missions used trajectories close to
heteroclinic connections for the design of their itineraries.
There are already several applied papers that have shown how to
compute connections between whiskered tori \cite{OwenB24, HenryS23,
Anderson21,McCarthyH23, CallejaDHLO12, BonaseraB20}. We hope
that the present work could help to provide more extensive catalogues  of
whiskered tori to be used as mileposts in diffusion routes or in
zero-cost maneuvers in mission design. 

A powerful approach to compute invariant objects and to prove
their existence is the parameterization method. Following this
framework, invariant tori and their
whiskers were computed in \cite{HaroL06a,HaroL06b,HaroL07} for
quasi-periodically forced maps using a variety of methods. Under
the same paradigm, \cite{FontichLS09} obtains {\em a-posteriori}
theorems for the existence of whiskered tori in exact symplectic
maps and exact symplectic vector fields. The proof in
\cite{FontichLS09} applies even when the (un)stable bundles are
non-trivial and their a-posteriori theorems show that the approximate
numerical computations obtained here are close---in an explicit
sense---to true solutions.  Relatedly, fast iterative schemes
were described in \cite{HuguetLS12} for both Lagrangian and
whiskered tori in Hamiltonian systems whereas in
\cite{FMHM24,FMHMKAM}, the authors provide theorems, algorithms,
and numerical results for whiskered tori and their invariant
bundles in quasi-periodic Hamiltonian systems. For resonant tori
see \cite{KumarAL21} and for a different approach based on
quasi-periodic Floquet transformations for response tori see
\cite{Gimeno22}.  The invariant bundles and whiskers can then
subsequently be used to compute dynamical connections.

For instance, numerical techniques were developed in
\cite{2009BaMoOll,BarrabesMO13} to compute families of
heteroclinic and homoclinic connections
between periodic orbits in Hamiltonian systems, whereas
computer-assisted methods for individual connections were used 
in \cite{WilczakZ03,WilczakZ05}. Naturally, dynamical connections
between higher dimensional objects also exist. See for instance
\cite{GomezKLMMR04, CanaliasM08} for the Circular Restricted
Three Body Problem (CRTBP) and \cite{KumarAL21,Kumar21GPU} for
connections between resonant tori in the planar circular and
elliptic problems, respectively. For entire families of
connections between center manifolds see \cite{Barcelona24}.
Other mathematical studies on the generation of intersections are
\cite{Eliasson94}. 
The paper \cite{FontichM00} showed that indeed, given a
transition chain (even of infinite length), there are orbits of
the system that follow it.  The classical results on existence of
complicated trajectories for traversal homoclinic intersections
of fixed points \cite{Poincare87c, Smale63,Smale67} extend also
to homoclinics of whiskered tori \cite{Bolotin95, Bolotin90,
GideaR03}. An area of current theoretical and practical interest
is the computation of heteroclinic connections among tori of
different dimensions.

One of the motivations of this work is to enable the construction
of detailed maps of whiskered tori and their invariant manifolds
in regions of interest. This is based on developing 
very efficient algorithms. With this goal in mind, we are interested
in computing accurately not only tori and their whiskers
but also the tangents. This makes it possible to
develop Newton methods to compute connections between invariant
tori or to deploy control algorithms in their neighborhood. 
We consider only tori for which the stable/unstable directions
are 1-dimensional. This  is the case most often considered in
celestial mechanics as in \cite{Arnold63}. Of course, we hope to
return to this problem and develop methods to compute invariant
tori with higher dimensional stable/unstable manifolds as well as
methods for the computation of whiskers in quasi-periodic
systems. Such methods can be useful in more realistic models in
celestial mechanics such as those of \cite{gomez2002solar} or
within the framework of \cite{UTSHowell25}.

The goal of this paper is then to develop and implement a method
for the simultaneous computation of parameterizations of
invariant tori and their whiskers by means of
Kolmogorov-Arnold-Moser schemes. In this work, we only describe
the algorithms and  implementations. We postpone for future
work theoretical studies of convergence and a-posteriori
theorems. 

Following \cite{HM21,FMHM24,FMHMKAM}, we make use of time-$T$ map
reductions, which allows one to work with functions of one fewer
variable than when working directly with the vector field.  This
improves the efficiency of the calculations since it mitigates
the \emph{curse of dimensionality}. It is also natural to
separate the calculation along the flow and the transversal
directions. One reason is that the torus remains very
differentiable along the flow and the algorithms for numerical
integration have been studied for several centuries and are well
understood. Another reason to separate the propagation along the
flow from other computations is that integration is very
parallelizable.

The method described here is based on a Newton-like iteration 
that computes simultaneously high order Fourier-Taylor expansions
for tori and their whiskers (as well as the normal contraction). 
The simultaneous computation leads to some acceleration because
improving the invariance equation for whiskers improves the
computation of tori. Compared with methods that compute
first the torus and then the whiskers
order by order, see e.g. \cite{KumarALl22},
we note that each iteration of our method---if the
torus is computed exactly---doubles the order the whiskers
are computed exactly. The numerical errors at all orders are
corrected at each step of the iterative procedure. 

We use identities coming from Hamiltonian geometry to simplify
the iterative step into substeps that are diagonal either in
Fourier representation or in grid representation. The result is
a highly efficient algorithm that does not require tuning.
Another improvement that we have
implemented is that, to take care of instabilities in the
propagation, we set up a multiple shooting approach.

The method we present is extremely efficient.
It is quadratically convergent as a Newton method
but does not need to store a matrix (much less to invert it). 
If we discretize the torus and whiskers using $N$ coefficients,
the algorithm only requires $\cO(N)$ storage and a step
only requires $\cO(N \log N)$ operations.
This is to be contrasted with conventional Newton methods (see,
e.g., \cite{Olikara16, GomezM01}), which require $\cO(N^2)$ storage and
$\cO(N^3)$ operations, as is the case for approaches that
discretize the invariance equation using collocation points or
Fourier series and apply a standard Newton method.
The paper \cite{HaroL07} contains a comparison between 
large matrix methods and fast methods based on reducibility
(both reducibility due to geometric identities as here 
and reducibility obtained through numerical computation).

In summary, the method presented here satisfies:
\begin{itemize}
\item  
The method is quadratically  convergent as a Newton method. That is, 
the error after applying one step is comparable to the square of 
the error before starting. 
\item
  In a system with $n$ degrees of freedom, we need to deal
  with functions of $n-1$ variables.
  Note that the cost of representing a function grows
  exponentially with the number of variables but only linearly
  with the dimension of the range. 
  \item 
  The storage requirements are small (no matrices needed).
  If parameterizations are discretized in $N$ Fourier-Taylor or grid-Taylor
  coefficients, the storage needed is $\cO(N)$.
  \item
  The operation count for one step of the iterative procedure is
  $\cO(N \log N)$.
  The iterative step breaks into substeps which 
  require only $\cO(N)$ operations (either in Fourier representation
  or in grid representation). Then, the $\cO(N \log N)$ operation
  count comes from switching representations with the FFT.
  In practice the FFT is well optimized even in hardware, so that
  it takes even less time than the theoretical bound. 
\item
  Its implementation is straightforward. The algorithm for one iterative step
  breaks into 11 steps, see Algorithm \ref{algo:newton}, each of
  which consists of natural operations and require few
  computational parameters. Many of these steps can be
  implemented in a single instruction in modern programming
  languages. 
\item
  It is backed up by \emph{a-posteriori} theorems which
  show it is reliable provided some (explicit) non-degeneracy
  conditions are met.

\item
  The algorithm obtains information not only of the invariant
  objects but also of their derivatives, making it possible to
  develop Newton methods to compute intersections or to devise
  control methods for space mission design. Such developments are
  not carried out in this paper. 
\end{itemize}

The basic idea for the fast Newton-like iteration is that we use
geometric identities to devise a method that only involves
operations in objects that are of size $\cO(N)$. We observe that
the geometric properties of the map imply that there is an
explicit frame where the linearization of the motion on the
whisker is an upper triangular matrix. This has been used for
invariant tori, but we now observe that this applies also for 
whiskers.  We note, however that methods based on parameterization 
methods (that are not as fast as those presented here)
but that work without leveraging geometry also exist 
\cite{CanadellH17a,CanadellH17b}. 

Once a local representation of the whiskers is computed, one can
use the flow in the future and in the past to get a global manifold.
Given this goal, it is worth to obtain an accurate local
representation in a {\em fundamental domain} which
we want to be as large as possible. The algorithm we present has
few computational parameters. Mainly the
number of coefficients used and a scale parameter whose theory
is studied in Section~\ref{sec:iter}, which affects the
numerical stability of the computations. In our numerical
runs, we typically make a preliminary calculation that allows to
fine tune the parameters and then, we make longer runs with the
optimized parameters. The algorithm can run largely unattended
in a variety of problems. 

We would like to note that a Newton method leads immediately to
continuation strategies on any parameter. Take the solution at a
value of a parameter as the starting point for the Newton method
for a slightly different value. Here, we present details about a
particularly efficient continuation method (including
extrapolations) along the period. This is equivalent to
conducting continuations along the energy. In the study of
continuation of solutions, we point out that the a-posteriori
result shows that the continuation can proceed until the torus
fails to satisfy some of the geometric conditions of the theorem,
which indicates breakdown.

Regarding implementations, we have applied our techniques in the
CRTBP where we computed a large set of non-resonant
$2-$dimensional invariant tori and their stable/unstable
manifolds studying the fundamental domain of the
parameterizations.  As expected, the methods are
extremely fast and for a fairly large number of coefficients, 
the calculations for realistic models such as 
the CRTBP are in the order of seconds in a commercial machine of
today without any sophisticated fine tuning. See Table
\ref{tab:comp-times}. 

This paper is organized as follows: In Section
\ref{sec:setting}, we introduce the setting and provide the
necessary background material. Section \ref{sec:invtori}
discusses whiskered tori and their geometric properties which
enable the construction of adapted frames.
In Section \ref{sec:Newton}, we present the iterative
procedure for computing invariant tori and their whiskers,
propose strategies for their continuation with respect
to parameters, and discuss how to study the fundamental domain of
the parameterizations. The algorithms are described in
pseudo-code. Section \ref{sec:considerations} addresses
computational aspects of the methods of Section \ref{sec:Newton}.
Lastly, in Section \ref{sec:numerics}, we apply our techniques to
the CRTBP and report on the results.

\section{Setting}
\label{sec:setting}
We assume all objects to be sufficiently
smooth---even real analytic. In this paper we
concentrate on the algorithms and postpone precise formulations
of regularities (which are essential to study convergence using
KAM theory) to a theoretical paper. 
 
\subsection{Geometric structures}
We will be working in an Euclidean space (more suitable for numerical
analysis). All the geometric objects will be expressed as matrices
and the geometric identities used become matrix identities. 

We assume we have an exact symplectic form
$\bm\om=\dif\bm\al$ on an open set $U\subset\RR^{2n}$,
where $\bm \al$ is the Liouville 1-form, endowing $ U $ with an
exact symplectic structure. Let $\Omega: U \to\RR^{2n\times2n}$
be the matrix representation of $\bm\om$. Since $\bOm$ is exact,
the matrix representation of the symplectic form satisfies
\begin{equation*}
\Omega(z)=\Dif a(z)^\ttop - \Dif a(z),
\end{equation*}
where $a(z)^\ttop$ is the matrix representation of $\bm\al$ at
$z$.
Then, for any $z\in U $ and $u,v\in \RR^{2n}$
\begin{gather*}
   \bm\om_z(u,v) = u^\ttop\Omega(z)v, \\
   \bm\al_z(u) =a(z)^\ttop u = u^\ttop a(z).
\end{gather*}

Let $\bJ$ be an almost complex structure on $U$
compatible with the symplectic form and with matrix representation 
$J:U\to\RR^{2n\times 2n}$. That is, $J$ is
anti-involutive and symplectic. In coordinates, these properties
read
\[
   J(z)^2=-I_{2n},\quad
J(z)^\ttop\Om(z)J(z)=\Om(z).
\]

The almost complex structure and the symplectic form induce a
Riemannian metric $\bg$ on $ U $ with matrix representation
$G: U \to\RR^{2n\times 2n}$ constructed as $G(z)=-\Om(z)J(z)$.

It is well known that one can arrange to have 
a compatible triple of symplectic form, almost complex 
structure, and metric $(\bOm,\bJ,\bg)$ such that 
for all $u,v\in \RR^{2n}$ and $z\in U $, the compatible 
triple satisfies
\[
\bOm_z(u,v) = \bm{g}_z(u,\bJ_z v).
\]
Note that any two elements of the triple determine 
the other one \cite{Berndt01}. 
Generally, the metric $\bm{g}$ corresponding to 
the two previously made choices is not the standard metric.
Nonetheless, for the numerical explorations, we use the standard
symplectic form and the standard metric.

The use of an almost complex structure is not strictly
necessary but it leads to significant simplifications and
numerical advantages.  Some of the properties that the matrix
representations satisfy are the following 
\begin{align*}
   \Omega(z)& = G(z)J(z), & \ \Omega(z)^\ttop &= -\Omega(z), \\
   J(z)^\ttop&= -J(z), & G(z)^\ttop &= G(z).
\end{align*}

For any smooth function $H: U \to\RR$, we construct a
Hamiltonian vector field $X: U \to \RR^{2n}$ as
\begin{equation}\label{eq:VF}
   X(z):=\Omega(z)^{-1} \Dif H(z)^\ttop.
\end{equation}
Let $\phi:\cD\subset\RR\times U\to U $ denote the flow for the
vector field $X$.  We will use the standard notation
$\phi(t,z)=\phi_t(z)$.  For fixed $t$, the diffeomorphism
$\phi_t$ is exact symplectic---that is, $\phi_t$ preserves the
symplectic form as
\[
\phi_t^*\bm \om = \bm\om,
\]
and the 1-form $\phi_t^*\bm \al-\bm\al$ is exact for some smooth
primitive function, see \cite{Zehnder76,HM21}.

\begin{remark}
It is possible to adapt this setting to other contexts, for
instance, where the phase space is some annulus
$(\TT^\ell\times\RR^{n-\ell})\times\RR^n$.
\end{remark}

\subsection{Fourier-Taylor series}\label{sec:FT}
In this work, we encounter functions defined in $\TT^d$ and
$\TT^d\times\RR$. We dedicate this section 
to describing  the representations of functions we will consider.

For every $\xi:\TT^{d}\times\RR\to \RR$, let its 
Fourier-Taylor series be given by the expansion
\begin{equation}\label{eq:FT}
\xi(\te,s) = \sum_{j=0}^\infty \xi_j(\te)s^j.
\end{equation}
Every $\xi_j:\TT^d\to\RR$ is 1-periodic in each component of
$\te\in\TT^d$ and can be represented by its Fourier series
\begin{equation} \label{eq:Fseries}
\xi_j(\te)=\sum_{k\in\ZZ^{d}} \hat\xi_{jk} \ee^{\bi 2\pi k\te},
\end{equation}
where $\hat\xi_{jk}\in\CC$ and $k \te:=k_1\te^1+...+k_{d}\te^{d}$.
The average of $\xi_j$ on $\TT^{d}$ is the coefficient
$\hat\xi_{j0}$ that we will denote as
\[
\langle\xi_j\rangle:=\int_{\TT^{d}}\xi_j(\te) d\te.
\]

\subsection{Cohomological equations} \label{sec:coho}
Given 
$\al,\be\in\RR,~\om\footnote{ Note that $\om$ is a rotation vector.  The
symplectic form is 
$\bm \om$.}\in\RR^d\setminus\{0\},~\la\in\RR\setminus
\{\pm 1,0\},$
and a real-analytic function $\eta:\TT^d\to\RR$,  
we will encounter the following functional equations for $\xi:\TT^d\to\RR$
\begin{equation}\label{eq:cohoFT}
 \al\xi(\te,s)-\be\xi(\te+\om,\la s) = \eta(\te,s),
\end{equation}
where we will require $\om$ to be Diophantine.
\begin{Def}\label{def:diophantine}
The rotation vector $\om\in\RR^{d}$ is Diophantine if
there exists $\gamma>0$ and $\tau\geq d$
such that for all $n\in\ZZ$ and $k\in\ZZ^{d}\setminus \{ 0\}$
\[
   |k\cdot\omega-n|\geq
   \frac{\gamma}{|k|_1^\tau},
\]
where $|\cdot|_1$ is the $\ell^1$-norm.
\end{Def}

If we use the Fourier-Taylor series of $\xi$ and $\eta$ as in 
\eqref{eq:FT}, we obtain---for each $j$---the
cohomological equation
\begin{equation} \label{eq:coho}
\al\xi_j(\te)-\be\la^j\xi_j(\te+\om) = \eta_j(\te).
\end{equation}
Then if we express $\xi_j$ and $\eta_j$ as Fourier series,
the coefficients of the solution for \eqref{eq:coho}
are formally given by 
\begin{equation}\label{eq:cohosol}
\hat \xi_{jk} = \frac{\hat\eta_{jk}}{\al-\be\la^j e^{\bi 2\pi
k\om}}.
\end{equation}
Observe that when $\al=\be\la^j$, the equation~\eqref{eq:coho} 
becomes the standard small divisors difference equation
\[
\xi_j(\te) - \xi_j(\te+\om) = \tilde\eta_j(\te),
\]
where $\tilde\eta_j=\frac{1}{\al}\eta_j$, which is only solvable
if $\aver{\tilde\eta_j}=0$. In this case, we encounter the
small-divisors $1- e^{\bi 2\pi k\om}$. This complicates the meaning 
of the solution of \eqref{eq:coho}---and consequently of the
solution of \eqref{eq:cohoFT}. We overcome this difficulty using the standard
argument from \cite{Russmann75} of assuming 
the frequency $\om$ satisfies a Diophantine condition. Then,
assuming that the function $\eta$ is analytic in a domain,
\eqref{eq:cohosol} defines a function $\xi$ analytic in a
slightly smaller domain. In \cite{Russmann75},  there  are very
precise estimates on the size of $\xi$ in the smaller domain in
terms of the size of $\eta$ in the original domain.
In this paper, focused on algorithms, we will not discuss
estimates which are of course crucial to establish convergence
and will be presented in a future theoretical paper.

%

\section{Whiskered tori} \label{sec:invtori}
Let us consider $(d+1)$-dimensional invariant tori
$\hat{\cK}\subset U $, where $d=n-2$, with frequency vector
$\hat\om\in\RR^{d+1}$. For a suitable $T$, we can look for a
codimension$-1$ torus $\cK\subset\hat\cK$---invariant under
a time-$T$ map---that generates $\hat\cK$. See
e.g. \cite{HM21,FMHM24}. First, let us define
$\hat{\om}:=\tfrac{1}{T}(\om,1)$, and assume $\om\in\RR^d$ to
be Diophantine, see Definition \ref{def:diophantine}.

Then, we look for parameterizations $K:\TT^d\to U $
that conjugate the dynamics in $\cK=K(\TT^d)$ under $\phi_T$
to a rigid in $\TT^d$ with rotation vector $\om$. That is, we require the
parameterization to satisfy the following invariance equation
\begin{equation} \label{eq:invK}
    \phi_T\comp K - K\comp \Rom=0,
\end{equation}
where the map $\Rom$ defines the dynamics in $\TT^d$ and is
given by:
\begin{align*}
   \Rom:\quad \TT^{d} &\longrightarrow
   \TT^d \\
\te&\longmapsto \te+\om.
\end{align*}
An invariant torus $\cK$ is whiskered if it has directions that
contract exponentially fast in the future and in the past under
iteration of the differential of the map. These directions are the
invariant bundles of the torus and together with the tangent to 
the torus and its symplectic conjugate, span the tangent bundle 
of $ U $ restricted to $\cK$. Tangent to the exponentially contracting
directions, the torus has attached invariant manifolds that converge
exponentially fast to the torus---the whiskers.    

To make the definition of whiskered tori  precise, let us  
define the {\em transfer matrix}
$M:\TT^{d}\to\RR^{2n\times2n}$ defined on the graph of $\K$
as
\begin{equation*}\label{eq:defM}
M(\te) = \DzphiT\comp\K(\te).
\end{equation*}
In order to study the dynamics under iteration of $M$, for $n>0$ let us
define 
\begin{equation*}
M^{(n)} = 
M\comp\Rom^{\comp (n-1)}\dots M\comp\Rom M,
\end{equation*}
where
\[
\Rom^{\comp n}:=\underbrace{\Rom\comp\cdots\comp\Rom}_{n\rm\ times}.
\]
Observe that $M$ satisfies the cocycle relation
\begin{equation*}
        M^{(n+m)} = M^{(n)}\comp\Rom^{\comp m}M^{(m)},
\end{equation*}
which we can use to define $M^{(-n)}$ as 
\[
        M^{(-n)} =
        \left(M^{(n)}\comp\Rom^{\comp(-n)}\right)^{-1},
\]
where
\[
   \Rom^{\comp (-n)}:=\underbrace{R_{-\om}\comp\cdots\comp
   R_{-\om}}_{n\rm\ times}.
\]

\begin{Def}\label{def:whisktori}
An invariant torus $\cK=\K(\TT^{d})$ is said to be whiskered
if for every $\te\in \TT^{d}$, $T_{\K(\te)}U$ admits an
invariant splitting
\[
   T_{\K(\te)}U =E_{\K(\te)}^s\oplus E_{\K(\te)}^u\oplus
   E_{\K(\te)}^c  
\] 
such that there exist constants $C>0,
\mu_1<1$, and $\mu_2>1$, satisfying $\mu_1\mu_2<1$ and

\noindent\begin{minipage}{\textwidth}
   \begin{align*}
    &v \in E^s_{\K(\te)} \Longleftrightarrow\quad |M^{(n)}(v)|
      \leq C\mu_1^n |v| \quad &&n > 0,\\
    &v \in E^u_{\K(\te)} \Longleftrightarrow\quad |M^{(n)}(v)|
      \leq C\mu_1^{-n} |v| \quad &&n < 0,\\
    &v \in E^c_{\K(\te)} \Longleftrightarrow\quad |M^{(n)}(v)|
      \leq C\mu_2^{|n|} |v| \quad &&n \in \mathbb{Z}.
\end{align*}
\end{minipage}
\end{Def}
In our case, we can take $\mu_2$ arbitrarily close to $1$, taking $C$ 
sufficiently large.  As it is well known, the tangent vectors 
in the torus may grow polynomially with the number of iterations. 

\begin{remark}
The transfer matrix $M$ induces a {\em transfer operator} 
whose spectral properties are directly linked with the dynamical
properties of the invariant torus $\cK$. We will not follow such
approach here but it is useful to acknowledge this relationship.
For some references see e.g.,
\cite{Mather68,SackerS74,HirschPS70,Mane78,mamotreto}.
\end{remark}
\begin{remark}
When $\K$ parameterizes an invariant torus,
\[
   M^{(n)} = \Dif\phi_T^{\comp n}\comp\K,
\]
where
\[
\phi_T^{\comp
n}:=\underbrace{\phi_T\comp\cdots\comp\phi_T}_{n\rm\ times} =
\phi_{nT}.
\]
\end{remark}

\subsection{Invariance equation for whiskers}
We are interested in an invariant manifold $\hat\cW\subset U$, containing
the rotational torus $\hat\cK$ and its associated rank$-1$
whisker (that is  whiskers for which the dimension of the stable and 
unstable direction is $1$). As in 
the previous section, we consider a codimension$-1$ manifold
$\cW\subset\hat\cW$, invariant under the time$-T$ map, that
generates $\hat\cW$. In particular, we have that $\cK\subset\cW$.

We look for a parameterization
$W:\TT^{d}\times\RR\to U $ such that under the map $\phi_T$, the
parameterization conjugates the dynamics on the torus to a rigid rotation
and the dynamics on the whisker to a constant contraction. 
The value of the contraction is one of the unknowns of the procedure. 

A completely analogous argument works for the unstable direction 
using the inverse dynamics $\phi_{-T}$. Therefore, in what follows, we
assume $\abs{\la}<1$. The conjugacy condition translates into 
$W$ satisfying the invariance equation
\begin{equation} \label{eq:invW}
    \phi_T\comp W - W\comp \Rw =0,
\end{equation}
where the map
\begin{align*}
\Rw:\quad \TT^{d}\times\RR &\longrightarrow \TT^{d}\times\RR \\
(\te,s)&\longmapsto (\te+\om,\la s)
\end{align*}
represents the internal dynamics of $\cW=\W(\TT^d\times\RR)$
under $\phi_T$. Note that at $s=0$, we recover the invariance
equation for $\cK$ which is parameterized by $K(\te) :=
W(\te,0)$.

We emphasize that, in \eqref{eq:invW} the unknowns are both $W$ and $\la$. 
We consider $\om$ and $T$ fixed. The iterative step will involve 
adjusting both $\W$ and $\la$. 

If we differentiate the invariance equation
\eqref{eq:invW}, we obtain
\begin{equation} \label{eq:invDW}
\left(\Dif\phi_T \comp W\right)  \Dif W = \left(\Dif W\comp
   \Rw\right) \Dif\Rw,
\end{equation}
where
\[
\Dif\Rw=
\left(\begin{array}{c|c}
I_{d} & \\\hline
 & \la
\end{array}
\right).
\]
This expression reveals that the invariance equation for $\cW$
carries a bundle invariant under $\Dif\phi_T$: the tangent bundle
of $\cW$. In what follows,
for the sake of notational simplicity, we will avoid the use of
parenthesis in formulas as in \eqref{eq:invDW}, where composition
precedes products.

\begin{remark}\label{rem:uniqueness}
Solutions to \eqref{eq:invW} are not unique---we can
scale the coordinate $s$ by some scaling factor $\rho$
and obtain the reparameterization
$W^{\rho}(\te,s):=W(\te,\rho s)$. There is also a lack of
uniqueness in the parameterization of the torus. If $\W$ is a
solution of \eqref{eq:invW}, for any $\al\in\TT^d$ and $\tau\in\RR$,
$\W^{\al,\tau}(\te,s):=\phi_\tau\comp\W(\te+\al,s)$ is also a 
solution. 

Note that all these parameterizations give the same torus 
but there is freedom in the choice of scale and origin of angle in 
the reference manifold domain of the parameterization.
To obtain a Newton method with locally unique solutions, 
we will chose some normalization conditions that fix the origin 
of coordinates in $\theta$ and the scale in $s$. 
All these normalization conditions are equivalent for the mathematics,
but some of them lead to algorithms that are less sensitive to round-off error. 
We will specify the normalization choices in  \ref{sec:iter}.
\end{remark}

\begin{remark}
It is possible to find a parameterization $\W_1:\TT^{d}\to T_{\cK} U $ 
of the invariant bundles $\W_1$ by looking for a solution
of the following invariance equation 
\begin{equation}
  \Dif \phi_T \comp \K\, \W_1 -  \W_1\comp\Rom \la= 0,
\end{equation}
see \cite{HM21}. The bundle $\W_1$ can be considered as a first
order approximation of $\cW$.
\end{remark}

\begin{remark}
If the vector field $X$ is reversible, as in the CRTBP,
we can obtain parameterizations of the unstable whisker from that
of the stable whisker. More specifically, if there exists an
involution $\varrho:\RR^{2n}\to\RR^{2n}$ satisfying
\[
X\comp\varrho = -\Dif\varrho X,
\]
then we can obtain a parameterization of the unstable whisker, say $\tilde \W$, from
that of the stable whisker, and vice versa, as $\tilde\W=\varrho\comp\W$.
Note that the parameterization given by $\W(\te,0)$ is
different from the parameterization given by $\tilde W(\te,0)$.
Nonetheless, both $\W(\te,0)$ and $\tilde W(\te,0)$ give the 
same torus $\hat\cK$, see the beginning of Section \ref{sec:invtori}.
\end{remark}

\begin{remark} 
We can consider \eqref{eq:invW} also as including a normal form 
for the dynamics in the stable  whisker. 
In contrast to the Birkhoff normal form  developed in a neighborhood 
of the torus, the normal form on the stable whisker 
does not have any Birkhoff invariants. Therefore, the formalism used 
here can  be used rather generally. 
\end{remark} 

\begin{remark} 
Equation \eqref{eq:invW} involves only functions of 
the dimension of the stable manifold. It
nevertheless, together with \eqref{eq:invDW} and the adapted frame 
constructed in the next section, gives approximate control of 
what happens in a neighborhood of the stable manifold 
with a good accuracy. 
Using a full normal form around the torus produces higher
accuracy for the transitions but a full normal forms requires
using functions of a number of variables equal to the dimension
of the phase space.
\end{remark} 

\begin{remark} 
The formalism used in this paper generalizes to higher
dimensional (un)stable manifolds,
but some new phenomena appear. If the motion has real $\la_i$, 
there  could be resonances (the product of some $\la_i's$ is
another). Such resonances require only a small change  and is not a stable phenomenon in
the sense that, if they are present in a system, they could be 
destroyed by a small perturbation. However if some of the $\la_i$ are
a complex conjugate pair, there appear extra small divisors (the first Melnikov 
condition) and a new formalism is needed.  The difficulties are 
larger if one allows elliptic directions. 
Some algorithms for elliptic tori based on other methods have been developed in 
\cite{JorbaV97, LuqueV11, CaraccioloFH25}

We hope to come back to these problems and develop algorithms 
similar to this paper's  to develop 
simultaneous computations of tori and normal forms.

\end{remark}

\subsection{Geometric properties and adapted frames for $\cW$}
Recall that, according to \eqref{eq:invDW}, $\Dif W$ is invariant under
$\Dif\phi_T$. Additionally, a quick observation reveals that
\begin{equation}\label{eq:invX}
   \Dif \phi_T\comp \W\, X\comp \W = X\comp \W\comp \Rw,
\end{equation}
where we used the commutativity of flows and the invariance of $\cW$.
Consequently $X\comp W$ is also invariant under the differential of
$\phi_T$ on $\cW$.

Let us define a frame $\L:\TT^d\times\RR\to\RR^{2n\times n}$ for
the tangent bundle of $\hat\cW$ on $\cW$, which is a subframe of
$T_\cW U$, as follows
\begin{equation}\label{eq:L}
L := \Big(\Dif_\te W~\big |~ X\comp W ~\big |~ \Dif_s W \Big).
\end{equation}
\begin{proposition}\label{prop:Lag-L}
The subframe $\L$ in \eqref{eq:L} is a Lagrangian
subframe.
\end{proposition}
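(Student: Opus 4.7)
The plan is to introduce the Gram matrix $\P(\te,s) := \LT(\te,s)\,\Om\comp\W(\te,s)\,\L(\te,s)$ and prove $\P\equiv 0$, which is precisely the Lagrangianity assertion (note that $\L$ has $d+2=n$ columns in $\RR^{2n}$).

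The first step is to package how $\DzphiT\comp\W$ acts on $\L$. Differentiating the invariance equation \eqref{eq:invW} separately in $\te$ and in $s$ and combining with \eqref{eq:invX}, the three blocks of $\L$ transform in a block-diagonal fashion:
\[
\DzphiT\comp\W\cdot\L \,=\, (\L\comp\Rw)\,A,
\qquad A := \mathrm{diag}(I_d,\,1,\,\la).
\]
The symplectic identity $\DzphiTT\,(\Om\comp\phi_T)\,\DzphiT = \Om$ evaluated at $\W(\te,s)$, together with $\phi_T\comp\W = \W\comp\Rw$, then yields the functional equation
\[
\P(\te,s) \,=\, A^\ttop\,\P(\te+\om,\la s)\,A,
\]
whose $n$-fold iteration reads $\P(\te,s) = \mathrm{diag}(I_d,1,\la^n)\,\P(\te+n\om,\la^n s)\,\mathrm{diag}(I_d,1,\la^n)$.

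Since $|\la|<1$ and $\P$ is bounded on a compact reference domain, as $n\to\infty$ the last row and last column of $\P$ (the entries involving the column $\Dif_s\W$) must vanish. The central diagonal entry $(X\comp\W)^\ttop(\Om\comp\W)(X\comp\W)$ vanishes by antisymmetry of $\Om$. For the surviving $(d+1)\times(d+1)$ upper-left block --- whose scaling factor from $A$ is the identity --- I would pick a subsequence $n_k$ along which $\te+n_k\om \to \bar\te$ (such $\bar\te$ cover all of $\TT^d$ by the Diophantine assumption on $\om$) to conclude that this block is independent of $(\te,s)$ and equals its value at $(\bar\te,0)$. It therefore remains to verify two identities at $s=0$: (a)\ $\DteKT\,(\Om\comp\K)\,\DteK = 0$, i.e.\ the invariant torus $\cK$ is itself Lagrangian; and (b)\ $\DteKT\,(\Om\comp\K)\,(X\comp\K) = 0$. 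Using \eqref{eq:VF}, the expression in (b) becomes $\Dif_\te(H\comp\K)^\ttop$; since $H$ is preserved by $\phi_T$ and $\phi_T\comp\K = \K\comp\Rom$, the function $H\comp\K$ is $\Rom$-invariant and hence constant by Diophantine density, so (b) follows.

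Identity (a) is the main obstacle, and it is here that the \emph{exact} symplectic character of $\phi_T$ (not merely symplecticity) enters. Using the primitive $\mathcal P$ with $\phi_T^*\bm\al - \bm\al = d\mathcal P$ and pulling back by $\K$, the 1-form $\beta := \K^*\bm\al$ on $\TT^d$ satisfies $\Rom^*\beta - \beta = d(\mathcal P\comp\K)$. Taking $d$ shows that the closed 2-form $d\beta = \K^*\bm\om$ is $\Rom$-invariant; Fourier expansion combined with the Diophantine condition then kills every nonzero Fourier mode of $d\beta$, while integration by parts on $\TT^d$ (a closed manifold) kills the zero mode. Hence $d\beta\equiv 0$, which is exactly (a), and the proof concludes.
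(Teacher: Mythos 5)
Your argument is correct in its core and shares the paper's skeleton: both proofs start from the invariance relation $\Dif\phi_T\comp\W\,\L=\L\comp\Rw\,\La$ and the resulting functional equation $\OmL=\La^\ttop\,\OmL\comp\Rw\,\La$ of \eqref{eq:invL}--\eqref{eq:cohoOmL} (your Gram matrix is the paper's $\OmL$ of \eqref{eq:L_lag}; note your symbol for it collides with the frame $\P$ of \eqref{eq:P}). Where you genuinely diverge is in how that equation is solved. The paper splits $\OmL$ into blocks, expands in Fourier--Taylor series, and uses the nonvanishing of the factors $1-e^{\bi2\pi k\om}\la^{j}$ (Diophantine $\om$, $\abs{\la}\neq 1$) to kill everything but the constant term of the $(n-1)\times(n-1)$ block, and then cites \cite{HM21} for the vanishing of that constant on $\cK$. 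You instead iterate the functional equation, use $\abs{\la}^{n}\to 0$ together with boundedness on a compact reference domain to annihilate the last row and column, and use density of the rotation orbit plus continuity to show the surviving block is constant and equal to its value at $s=0$; you then prove the torus-level facts (isotropy of $\cK$, and $\Dif_\te(H\comp\K)=0$ from preservation of $H$) from scratch rather than quoting them. Your route buys a self-contained, softer argument with no series manipulation; the paper's route works verbatim for $\abs{\la}>1$ (your contraction step as written needs $\abs{\la}<1$, which is indeed the standing assumption) and is closer in form to the quantitative estimates the authors intend for their KAM-type analysis.

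Two small points. First, ``Lagrangian subframe'' also carries a maximality requirement: the paper ends by remarking that one must still check that $\Dif\W$ and $X\comp\W$ span an $n$-dimensional subspace, a step your proposal omits---vanishing of the Gram matrix alone gives only isotropy. Second, your claim that the \emph{exact} symplectic character of $\phi_T$ is what makes identity (a) work is slightly misplaced: taking $d$ of $\Rom^{*}\beta-\beta=d(\text{primitive}\comp\K)$ merely recovers $\Rom^{*}(\K^{*}\bm\om)=\K^{*}\bm\om$, which already follows from $\phi_T^{*}\bm\om=\bm\om$; what your integration-by-parts step actually uses is exactness of the form $\bm\om=\dif\bm\al$ (so that $\K^{*}\bm\om=d\beta$ is exact on $\TT^{d}$ and its constant Fourier mode has zero average), not the primitive function of the map.
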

\begin{proof}
That $L$ is a Lagrangian subframe is equivalent to 
\begin{equation}
\label{eq:L_lag}
\Om_L:=L^{\ttop} \Om\comp W  L=0.
\end{equation}
First note that from \eqref{eq:invDW} and \eqref{eq:invX}
\begin{equation}\label{eq:invL}
\Dif\phi_T\comp W  L - L\comp\Rw  \La =0,
\end{equation}
where
\[
\La:=
\left(\begin{array}{c|c}
      I_{n-1} & \\\hline
              &\la
   \end{array}
   \right).
\]
Then, using the symplecticity of $\phi_T$, \eqref{eq:invW}, and
\eqref{eq:invL} we obtain
\begin{equation}\label{eq:cohoOmL}
   \OmL - \La^\ttop\OmL\comp\Rw  \La = 0.
\end{equation}
Let us split $\Om_L$ into blocks of sizes $(n-1)\times(n-1),
(n-1)\times1, 1\times(n-1),$ and $1\times1$ as 
\begin{equation*}
   \Om_L = \begin{pmatrix}\Om_L^{11} & \Om_L^{12}\\ \Om_L^{21} &\Om_L^{22}
   \end{pmatrix}.
\end{equation*}
Then, \eqref{eq:cohoOmL} translates into
\begin{align*}
   \Om_L^{11} -\phantom{\la} \Om_L^{11}\comp\Rw\phantom{\la} &= 0,\\
   \Om_L^{12} -\phantom{\la} \Om_L^{12}\comp\Rw\la &= 0,\\
   \Om_L^{21} - \la\Om_L^{21}\comp\Rw\phantom{\la} &= 0,\\
   \Om_L^{22} - \la\Om_L^{22}\comp\Rw\la &= 0,
\end{align*}
and if we use Fourier-Taylor series, we obtain 
\begin{align*}
   \sum_{j,k}\left(\Om_L^{11}\right)_{jk}\left(1-e^{\bi2\pi
   k\om}\la^j\right)e^{\bi2\pi k \te}s^j&=0,\\   
   \sum_{j,k}\left(\Om_L^{12}\right)_{jk}\left(1-e^{\bi2\pi
   k\om}\la^{j+1}\right)e^{\bi2\pi k \te}s^j&=0,\\
   \sum_{j,k}\left(\Om_L^{21}\right)_{jk}\left(1-e^{\bi2\pi
k\om}\la^{j+1}\right)e^{\bi2\pi k \te}s^j&=0,\\
   \sum_{j,k}\left(\Om_L^{22}\right)_{jk}\left(1-e^{\bi2\pi
   k\om}\la^{j+2}\right)e^{\bi2\pi k \te}s^j&=0
\end{align*}
for all $\te$ and $s$. Hence,
$\Om_L^{11}=\left(\Om_L^{11}\right)_{00}$, $\Om_L^
{12}=\Om_L^{21}=\Om_L^{22}=0$. Hence, $\Om_L$ is constant.
We can then inspect the Lagrangian character of $L$ on the torus $\cK$. That is,
\begin{equation*}
\Om_L(\te,s) = \Om_L(\te,0).
\end{equation*}
Furthermore, it is known, see e.g. \cite{HM21}, that $L$
generates a Lagrangian subspace on $T_\cK U $,
i.e., $\Om_L(\te,0)=0$. Therefore, $\Om_L=0$. It remains to be shown
that $\Dif W$ and $X\comp W$ generate an $n-$dimensional subspace. This
can be done without much work by looking at the vector field
invariance equation for the whisker generated by $\cW$. 
\end{proof}
\begin{remark}
Note that the fact that $L$ is a Lagrangian subframe implies that
$\cW$ is an isotropic manifold, i.e, 
\[
   (\Dif \W)^\ttop\Om\comp\W\Dif\W = 0,
\]
which is a well known result \cite{Zehnder76}.
\end{remark}
Let us now complement the subframe $L$. In
particular, we can complement $L$ with a normal bundle
$N:\TT^{d}\times\RR\to\RR^{2n\times n}$ to a full symplectic basis of
$T_\cW  U $ given by a frame
$P:\TT^{d}\times\RR\to\RR^{2n\times 2n}$ as
\begin{equation} \label{eq:P}
P:=\left(L\  \ N \right),
\end{equation}
where
\begin{equation}\label{eq:N}
   \N:= \J\comp\W  \L\hspace{1pt} G_L^{-1}
\end{equation}
and
\[
 \GL:=L^\ttop \G\comp\W\ L.
\]

The formula \eqref{eq:N} gives an explicit Lagrangian subspace
complementing the Lagrangian subspace of the tangent.  This is an
explicit  implementation for our case of the well known result
that every Lagrangian subspace can be complemented with another
Lagrangian subspace to give a fully symplectic space
\cite{Cannas01}. 

The frame $P$ has geometric
properties that are fundamental in our methods. In particular, by
construction, $P$ is symplectic with respect to the standard
symplectic form. That is,  
\begin{equation}\label{eq:symp}
   \P^\ttop \Om\comp \W \P = \OmO = 
   \left(\begin{array}{c|c}
         & -I_n\\\hline
      I_n & \end{array}\right).
\end{equation}
This identity shows that, on $\cW$, the symplectic form in
the coordinates given by $\P$ reduces to the standard symplectic
form. If we right-multiply \eqref{eq:symp} by $\P^{-1}$ and
then left-multiply it by $-\OmO$, we obtain the following
formula for the inverse of $\P$
\begin{equation}\label{eq:invP}
   \P^{-1}=-\OmO\PT\Om\comp\W.
\end{equation}
Also, the differential of $\phi_T$ on $\cW$ in the coordinates given by
$P$ reduce to an upper triangular matrix constant along the diagonal.
That is
\begin{equation}\label{eq:AR}
   (\P\comp\Rw)^{-1} \Dif\phi_T\comp W P =
   \left(\begin{array}{c|c}
         \La & S \\\hline 
             & \mybox{\La^{-\ttop}}
      \end{array}
   \right)
\end{equation}
where $S:\TT^{d}\times \RR\to\RR^{n\times n}$ is referred as the {\em torsion} 
or shear of $\cW$ and is defined as
\begin{equation}\label{eq:Sdef}
S = \N^\ttop\comp\Rw\ \Om\comp \W\comp\Rw\ \Dif\phi_T\comp\W \N.
\end{equation}
The reducibility of $\Dif \phi_T$ on $ \cW$, as expressed in
\eqref{eq:AR}, follows from
the Lagrangianity of $\L$, the invariance of $\L$, and the
symplecticity of $\P$, i.e., from 
\eqref{eq:L_lag}, \eqref{eq:invL}, and \eqref{eq:symp}.
This property is known
as {\em automatic reducibility} and it holds in a variety of cases,
see e.g. \cite{mamotreto,HuguetLS12}.

\begin{remark}
\label{rem:normalize} 
When computing tori and bundles, see \cite{HM21,FMHM24}, it is
possible to find changes of variables that reduce the torsion $S$
to a constant matrix on the torus $\cK$. This might improve 
numerical stability. Also, with some extra work, one could 
obtain both the stable and the unstable bundles simultaneously as
part of the frame $\P$. 
The reduction of the torsion to constant is obtained by applying
a linear symplectic transformation to the frame $\P$. In the case
of whiskers,
it is possible to reduce the torsion by means of a linear
symplectic transformation to
\begin{equation*}
\S = \begin{pmatrix}  \langle S^1_0 \rangle &\langle
   S^2_1\rangle s\\ \langle S^3_1 \rangle s & \langle S^4_2\rangle
s^2\end{pmatrix},
\end{equation*}
where the splitting of $\cS$ is in blocks of
size $(n-1)\times (n-1), (n-1)\times 1, 1\times (n-1),$ and $1\times
1$. Therefore, the torsion $S$ can be reduced to constant in
$\TT^{d}$ and only with terms up to order 2 in
$s$. Note that this involves solving a cohomology equation 
but then, the multiplication by the torsion becomes multiplication
by a constant rather than multiplication by a function. 
\end{remark}

\section{Computation of whiskers}
\label{sec:Newton}
For the computation of $\cW$, assume we have a parameterization
$\W:\TT^{d}\times\RR\to U $ and $\la$ that satisfy \eqref{eq:invW}
approximately. Let us define the error in the invariance of $\cW$
as $E:\TT^d\times\RR\to\RR^{2n}$ given by 
\begin{equation}\label{eq:error}
E:=\phiT\comp \W  - \W\comp \Rw.
\end{equation}
Our objective is to find a map $\De
W:\TT^{d}\times\RR\to\RR^{2n}$ and $\De\la\in\RR$, such that for
$\bar \W=\W+\DeW$ and $\barla=\la+\Dela$, the new error $\bar E$
satisfies $\|\bar E\|' \sim\cO(\|E\|^2)$ for some
suitable norms $\|\ \|$ and $\| \ \|'$\footnote{As it is well known in KAM 
theory, the norm $\| \ \|'$ is a norm corresponding to 
analytic functions in a smaller domain. The $\cO$ 
term also involves explicit quantitative bounds in terms of 
the domain loss.}
.
More specifically, we will construct sequences for $\W$ and $\la$
by means of a quadratically convergent iterative scheme such that
in the limit we have an exact solution of \eqref{eq:invW}.

In the approximately invariant case, where the invariance equation is
satisfied approximately, the range of $\W$ will be approximately
isotropic. Consequently, the frame $\P$ will be approximately symplectic
and the reducibility of the dynamics will hold approximately. The
error in these properties is actually controlled by the error in the
invariance equation \eqref{eq:error}. 
In the iterative scheme, we can neglect these reducibility errors
as their contribution to the new error is of second order in $E$
in the sense of Nash-Moser.

\subsection{Iterative scheme}\label{sec:iter}
For the iterative procedure, we require that in the linearized invariance
equation, the corrections $\DeW$ and $\Dela$ are such that they cancel
$E$. That is, we require
\begin{equation*} 
   \Dif\phi_T\comp \W  \DeW - \DeW\comp\Rw = -E +\pd_s
   \W\comp\Rw \Dela s.
\end{equation*}
If we express $\De W$ in the coordinates given by $P$, i.e.,  
$\DeW = \P\xi$, and use \eqref{eq:AR}, we obtain
\begin{equation*}
   \left(\P\comp\Rw\right)^{-1}\Dif\phi_T\comp \W \P\xi - \xi\comp\Rw =
\left(\P\comp\Rw\right)^{-1}\bigg(-E + \pd_s \W\comp\Rw \De\la s\bigg).
\end{equation*}
We can now use \eqref{eq:invP} and, after neglecting
quadratically small errors, we obtain
\begin{equation}\label{eq:sys1} 
\left(
\begin{array}{c|c}
   \La & S\\\hline
       & \mybox{\La^{-\ttop}}
\end{array}   
\right) \xi - \xi\comp\Rw
=\eta + e \De\la s,
\end{equation}
where
\begin{equation}\label{eq:defeta}
\eta := \Omega_0 \left(P\comp\Rw\right)^\ttop\Omega\comp W\comp\Rw
~ E, \quad
e:=\begin{pmatrix} e_n\\ 0_n \end{pmatrix},
\end{equation}
$e_n=(0,...,1)^\ttop\in\RR^n$, and $0_n=(0,...,0)^\ttop\in\RR^n$.
If we split $\xi$ and $\eta$ into $(n-1)\times 1
\times(n-1)\times 1$ components as
\begin{equation*}
            \xi=\begin{pmatrix}
            \xi^1\\
            \xi^2\\
            \xi^3\\
            \xi^4
            \end{pmatrix} , \quad
            \eta=\begin{pmatrix}
            \eta^1\\
            \eta^2\\
            \eta^3\\
            \eta^4
            \end{pmatrix}, \\           
\end{equation*}
and $S$ into blocks of sizes $(n-1)\times(n-1), (n-1)\times 1,
1\times(n-1)$, and $1\times 1$ as
\begin{equation*}
S=\begin{pmatrix}
S^1 & S^2\\
S^3 & S^4
\end{pmatrix},
\end{equation*}
we obtain the following system of equations
\begin{align}
   \xi^1+S^1\xi^3 +S^2\xi^4-\xi^1\comp\Rw &= \eta^1,\label{eq:coho1} \\
   \la\xi^2+S^3\xi^3+S^4\xi^4-\xi^2\comp\Rw &= \eta^2+\De\la s, \label{eq:coho2}\\
   \phantom{ \la\xi^2+S^3\xi^3+}\xi^3-\xi^3\comp\Rw &= \eta^3,\label{eq:coho3}\\
   \la^{-1}\xi^4-\xi^4\comp\Rw &= \eta^4.\label{eq:coho4}
\end{align}
Let us expand $\xi, S,$ and $\eta$ in Fourier-Taylor series, see Section
\ref{sec:FT}. 
We can first solve for $\xi^4$ as described in Section
\ref{sec:coho}.  
If $\langle \eta^3_0 \rangle=0$, we can solve for $ \xi^3$ up to
an arbitrary additive constant $\langle\xi^3_0\rangle$ that will
be chosen later. Using the exactness of $\phi_T$, it is shown in \cite{HM21}
that $\aver{\eta^3_0}$, is  quadratically small in $E$, 
so we can just set it to zero in the iterative 
step without affecting the quadratic convergence. 
Once we have solved for $\xi^4$ and for $\xi^3$ 
with $\langle \xi^3_0 \rangle$ free, we obtain the following
equations for $\xi^1$ and $\xi^2$ 
\begin{align}
\phantom{\la}\xi^1 - \xi^1\comp\Rw &= \de^1, \label{eq:xi11}\\
\la\xi^2 - \xi^2\comp\Rw &= \de^2, \label{eq:xi22}
\end{align}
where 
\begin{align}
\de^1&:= \eta^1-S^2\xi^4 - S^1\xi^3,\label{eq:de1}\\
\de^2&:=\eta^2 - S^4\xi^4 - S^3\xi^3 + \De\la s.\label{eq:de2}
\end{align}
Note that in \eqref{eq:xi11} we have an obstruction at order $0$ in $s$ 
and in \eqref{eq:xi22} we have an obstruction at order $1$ in $s$, see
Section \ref{sec:coho}.
Therefore, in order to solve for $\xi^1$ 
and for $\xi^2$, we need that $\langle\de^1_0 \rangle=0$ and
$\langle \de^2_1\rangle=0$. We can
use the freedom in $\langle \xi^3_0\rangle$ and the increment of  parameter
$\De \la$ to adjust these averages. 
Let 
\begin{equation}\label{eq:xi3}
\xi^3 = \tilde \xi^3 + \langle \xi^3_0 \rangle,
\end{equation}
where $\langle \tilde \xi^3_0 \rangle=0$, and let us define 
\begin{align}
\zeta^1 &:= \eta^1 -S^2\xi^4 -
S^1\tilde\xi^3,\label{eq:zeta1}\\
\zeta^2&:= \eta^2 - S^4\xi^4 -
S^3\tilde\xi^3.\label{eq:zeta2}
\end{align}
We then choose $\langle \xi^3_0\rangle$ and $\De\la$ such that
\begin{equation}\label{eq:linsys}
\left(
\begin{array}{c|c}   
\langle S^1_0\rangle &  \\\hline
\mybox{\langle S^3_1}\rangle & -1
\end{array}
\right)
\begin{pmatrix}
\langle \xi^3_0\rangle \\ \De\la
\end{pmatrix} = \begin{pmatrix}
\langle \zeta^1_0\rangle \\
\langle \zeta^2_1\rangle
\end{pmatrix},
\end{equation}
given the non-degeneracy twist condition
\begin{equation}\label{eq:nondeg}
\rm{det}
\langle S^1_0\rangle 
\neq 0.
\end{equation}
Recall that \eqref{eq:xi11} at order 0 and \eqref{eq:xi22} at
order 1 are small divisors cohomological equations---therefore,
$\langle \xi^1_0 \rangle$ and $\langle\xi^2_1\rangle$ are free.
This underdeterminacy reflects the freedom in the choice of $K$,
and in the choice of the first order approximation of $\cW$, i.e.,
the freedom in the scaling of the invariant bundle of $\cK$, see
Remark \ref{rem:uniqueness}. A simple
choice is to take $\langle \xi^1_0 \rangle=0$ and
$\langle\xi^2_1\rangle=0$.  
Then, we can solve for $\xi^1$ and $\xi^2$ as described in
Section \ref{sec:coho}. Lastly, 
we obtain the corrected parameterization
and the corrected rate of contraction as $\bar \W= W + P \xi$ and
$\barla=\la + \Dela$, respectively.

\begin{remark}\label{re:scaling}
As already mentioned, there is certain freedom in the
parameterization of $W$---we can
scale the coordinate $s$ by some scaling factor $\rho$
and obtain the reparameterization
$W^{\rho}(\te,s)=W(\te,\rho s)$. Using the
Fourier-Taylor representation of Section \ref{sec:FT},
the coefficients of $\W^\rho$ are
${\W^\rho}_j(\te)=\rho^j\W_j(\te)$.

All such  parameterizations are mathematically equivalent but the
scaling can have effects on the numerics. If one chooses scaling factors 
such that coefficients grow or decrease to zero too fast with the order,
the algorithm might be affected by round-off error. 
In order to avoid this, we can
take $\rho=1/r$, where $r$ is an estimate of the radius of convergence
of the series in $s$ that we can obtain with a root test.
\end{remark}

\begin{remark}\label{rem:correct-terms}
Suppose we represent $\W$ as a Fourier-Taylor
series, see Section \ref{sec:FT}, and assume we have,
for instance, the torus and the bundle. That
is, we have the terms $\W_0$ and $\W_1$.
Since the iterative procedure is quadratically
convergent, after $it$ iterations we have
$2^{it+1}-1$ correct terms for $\W$. 
\end{remark}
We conclude this section with algorithm  \ref{algo:newton} for
one step of the iterative scheme.
\begin{algorithm}
\caption{Iterative step}\label{algo:newton}
\raggedright
Given $\W$ and $\la$ satisfying \eqref{eq:invW} approximately, compute
the corrected parameterization $\bar\W$ and rate of contraction
$\bar\la$ by
following these steps:
\begin{algorithmic}[1]
\STATE Construct $L,N,$ and $P$ from \eqref{eq:L},
\eqref{eq:N}, and \eqref{eq:P}, respectively.   
\STATE Compute $S$ from \eqref{eq:Sdef}. (Optionally: Normalize it as detailed in Remark~\ref{rem:normalize})
\STATE Compute the error in the invariance equation, $E$, from \eqref{eq:error}. \STATE Construct $\eta$ from the definition \eqref{eq:defeta}.  
\STATE Solve \eqref{eq:coho4} to obtain $\xi^4$.
\STATE Solve \eqref{eq:coho3} to obtain
$\tilde\xi^3$ with $\langle\tilde\xi^3_0\rangle=0$.
\STATE Compute $\zeta^1$ and $\zeta^2$ from \eqref{eq:zeta1} and
\eqref{eq:zeta2}, respectively. 
\STATE Solve the linear system \eqref{eq:linsys} given the non-degeneracy
condition \eqref{eq:nondeg} in order to obtain $\langle \xi^3_0\rangle$
and $\De\la$. 
\STATE Construct $\xi^3$ from \eqref{eq:xi3} and compute $\de^1$ and
$\de^2$ from \eqref{eq:de1} and \eqref{eq:de2}, respectively. 
\STATE  Set $\langle \xi^1_0\rangle=0$ and $\langle\xi^2_1\rangle=0$.
Solve \eqref{eq:xi11} and \eqref{eq:xi22}
to obtain $\xi^1$ and $\xi^2$.
\STATE Set $\bar \W\leftarrow \W + P\xi$ and $\barla\leftarrow\la+\De\la.$
\end{algorithmic}
\end{algorithm}
Note that all steps of the algorithm are vector operations 
either in Fourier representation or in grid representation.
Therefore, the operation count is linear in the size of data and no
large matrices (of dimension the size of the data) need to be stored and 
much less inverted.  
Note also that all the operations are well structured. They can 
be implemented  in a line in a modern vector language or by developing 
a library manipulating Fourier-Taylor and grid-Taylor representations of 
functions. 

\subsection{Continuation with respect to $T$}\label{sec:cont}
A Newton method gives rise to a continuation strategy. 
The most elementary is taking a well computed zero for 
a value of the parameter as an initial guess of 
the algorithm to compute a zero for an slightly different value,
but one can also use extrapolations, etc. Note that the a-posteriori 
theorem guarantees that these continuation methods will progress 
until parameter values where the hypothesis of the a-posteriori theorem 
do not hold, which indicates breakdown of the tori.

In this section, we discuss a particularly efficient continuation 
method on the parameter $T$, the time used for the flow map. 
This is equivalent to multiplying the frequency of the torus 
which is the usual KAM method of exploring what happens in energy 
surfaces. The continuation in the parameter $T$ is significantly 
better behaved that continuation on other parameters since we
can obtain explicit extrapolation formulas and reuse many of 
the calculations. 

Assume we have a $\phi_T$ invariant torus and its whisker. That
is, we have $(W,\la)$ satisfying \eqref{eq:invW}.
Observe that \eqref{eq:invW} defines implicitly $(\W,\la)$ as
functions of $T$. 
We are now
interested in another solution $(W^*,\la^*)$ of \eqref{eq:invW},
defining a $\phi_{T^*}$ invariant whisker $\cW^*$ for
some $T^*$ close to $T$. As is standard in continuation methods, we
provide in this section a method to compute the derivatives of
$(W,\la)$ with respect to $T$ from where we can compute a first
order approximation of $(W^*,\la^*)$. 

Let Eq.~\eqref{eq:invW} define implicitly $W$ and $\la$ as
functions of $T$.  Then, differentiating \eqref{eq:invW} with respect to
$T$, we obtain
\begin{equation*}
\Dif_z\phi_T\comp \W  \pd_T W +X\comp W\comp\Rw - \pd_T
\W\comp\Rw -\Dif_s W\comp\Rw  \pd_T \la s =0.
\end{equation*}
If we express $\pd_T W$ in the coordinates given by the frame as
$\pd_T W=P\xi$, and left-multiply the
previous expression by $\left(P\comp\Rw\right)^{-1}$, we obtain
the following system of cohomological equations 
\begin{equation} \label{eq:cont-sys}
\left(
\begin{array}{c|c}
\Lambda & S\\\hline
        & \mybox{\Lambda^{-\ttop}}
\end{array}
\right)
\xi - \xi\comp\Rw =\begin{pmatrix}
            0_{n-2}\\
            -1\\
            \pd_T \la\hspace{0.1em} s\\
            0_{n}
            \end{pmatrix}.
\end{equation}
This system is completely analogous to \eqref{eq:sys1}, but with 
\begin{equation}\label{eq:eta_cont}
\eta = \begin{pmatrix}
            0_{n-2}\\
            -1\\
            0_{n+1}
            \end{pmatrix}
\end{equation}
and $\pd_T\la$ instead of $\De\la$. In this case,
$\xi^4=0$ and $\xi^3=\langle\xi^3_0\rangle$. We can choose 
$\langle\xi^3_0\rangle$ and $\pd_T\la$ to adjust
averages for the equations for $\xi^1$ and $\xi^2$ to be 
solvable. That is, we choose $\langle\xi^3_0\rangle$ and
$\pd_T\la$ such that  
\begin{equation}\label{eq:linsys2}
\left(
\begin{array}{c|c}
\langle S^1_0\rangle &  \\\hline
\mybox{\langle S^3_1\rangle} & -1
\end{array}
\right)
\begin{pmatrix}
\langle \xi^3_0\rangle \\ \pd_T\la
\end{pmatrix} = \begin{pmatrix}
	-e_{n-1} \\
	0
\end{pmatrix},
\end{equation}
where $e_{n-1}$ is as in \eqref{eq:defeta}. 
Then, we solve for $\xi^1$ and $\xi^2$ as in the previous 
section.

The computation of derivatives of $(\W,\la)$ with respect to $T$
is summarized with algorithm \ref{algo:continuation}.
\begin{algorithm}[H]
\caption{Computation of derivatives with respect to
$T$}\label{algo:continuation}
\raggedright
Given $T$ and $(W,\la)$ satisfying \eqref{eq:invW},
find $\pd_T W$ and $\pd_T\la$ by following
these steps:   
\begin{algorithmic}[1]
\STATE Construct $L,N,$ and $P$ from \eqref{eq:L},
      \eqref{eq:N}, and \eqref{eq:P}, respectively.   
   \STATE Compute $S$ from \eqref{eq:Sdef}.
   \STATE Construct $\eta$ from the definition \eqref{eq:eta_cont}. 
   \STATE Set $\xi^4=0$. 
   \STATE Solve the linear system \eqref{eq:linsys2}  given the
   non-degeneracy condition \eqref{eq:nondeg} in order to
   obtain $\langle \xi^3_0\rangle$ and $\pd_T\la$.
   \STATE Set $\xi^3 = \langle \xi^3_0\rangle$.
   \STATE Compute $\de^1$ and $\de^2$ from \eqref{eq:de1} and
      \eqref{eq:de2}, respectively, with $\pd_T\la$ instead
      of $\De \la$. 
   \STATE Set $\langle \xi^1_0\rangle=0$, 
      $\langle\xi^2_1\rangle=0$, and solve
      \eqref{eq:xi11} and \eqref{eq:xi22} 
      to obtain $\xi^1$ and $\xi^2$, respectively.
   \STATE Set $\pd_T W=P\xi$.
\end{algorithmic}
\end{algorithm}

\subsection{Fundamental domain}\label{sec:domain}
Assume we have a parameterization $W$ given by a truncated
Fourier-Taylor series at order $N_T$. The series for $W$ will only be
valid in $\TT^{d}\times\mathcal{D}$ for some open interval
$\mathcal{D}\subset\RR$---we refer to $\cD$ as the fundamental
domain of $\W$.
Let us consider the following norm for the error \eqref{eq:error} 
\begin{equation}\label{eq:normE}
   \|E\|_{\cA}:= \sup_{\subalign{\theta&\in\TT^{d} \\
                s&\in\cA}} \|E(\theta,s)\|_{\infty},
\end{equation}
where $\cA\subset\RR$.
Then, we can define $\cD$ as the largest interval where  
\begin{equation}\label{eq:tol}
   \|E\|_{\cD}<\epsilon,
\end{equation}
holds for some threshold $\epsilon>0$ small. If we proceed formally,
we can construct a Fourier-Taylor series for $E$ of the form 
\begin{equation}\label{eq:serieE}
    E(\theta,s)=\sum_{j=0}^\infty E_j(\theta)s^j.
\end{equation}
Then, using the definition \eqref{eq:normE}, we have the inequality
\[
   \|E\|_{\cD}=\sup_{\subalign{\theta&\in\TT^{d} \\
    s&\in\cD}}\| \sum_{j=0}^\infty E_j(\theta) s^j\|_\infty
    \leq \sum_{j=0}^\infty\left( \sup _{\subalign{\theta&\in\TT^{d} \\
    s&\in\cD}} \| E_j(\theta)s^j\|_\infty\right)
\]
and if we define 
\begin{equation}\label{eq:normsE}
    \|E_j\|:=\sup_{\theta\in\TT^{d}
    } \|E_j(\theta)\|_\infty, \quad D:=\sup_{s\in\cD}|s|,
\end{equation}
we can estimate the fundamental domain by requiring
\begin{equation}\label{eq:normEin}
   \|E\|_{\cD}\leq \sum_{j=0}^{N_T}\|E_j\| D^j +
                \sum_{j=N_T+1}^\infty\|E_j\|D^j<\ep.
\end{equation}
Observe that if we want \eqref{eq:normEin} to hold,
necessarily for any $k$
\begin{equation}\label{eq:Ek}
\norm{E_k}D^k<\ep.
\end{equation}
Let us now inspect which term contributes the most in the sums of
\eqref{eq:normEin} so we can obtain a useful bound for $D$ from
\eqref{eq:Ek}.

Observe that $E_j$, for $j\leq N_T$, should be small since these are
the terms that we control with our iterative procedure. The terms
for $j>N_T$ are terms of order $j$ of $\phi_T\comp\W$ since we do
not have terms of order higher than $N_T$ for $\W$. Consequently,
we assume that the term that contributes the most in
\eqref{eq:normEin} is in the infinite sum.
Note that the radius of convergence for
the series \eqref{eq:serieE}, say $\tilde D$, satisfies
$D\leq\tilde D$.  Furthermore, $E$ is analytic so we can bound
each $E_j$ using Cauchy estimates by
\[
\norm{E_j}\leq\frac{C}{\tilde D^j},
\]
for some constant $C$. Consequently, in the infinite sum, we can
bound each term by
\[
   C\left(\frac{D}{\tilde D}\right)^j,
\]
from where we see that the largest upper bound corresponds to the
term $j=N_T+1$.
We can then obtain the following necessary condition for $D$ 
\begin{equation}\label{eq:estD}
   D<\left(\frac{\epsilon}{\|E_{N_T+1}\|}\right)^{\frac{1}{N_T+1}},
\end{equation}
from where we estimate the fundamental domain as $\cD = [-D,D]$.

Alternatively, for some grid of values $\{\te_l\}_{l\in{\rm I}}$,
where ${\rm I}$ is some index set, and some $\de > 0$ small
we can estimate $\|E\|_{\cA}$ numerically as
\begin{equation}\label{eq:discnormE}
   \|E\|_{\cA}\approx \max_{\subalign{l&\in {\rm I}\\
   s\in\ZZ&\de\cap\cA}}
   \left(\|\phi_T\big(W(\theta_l,s)\big) - W(\theta_l +\omega, \lambda
   s)\|_{\infty}\right).
\end{equation}
To estimate the fundamental domain, let us define
\[
   k = \max\{l\in\NN : \norm{E}_{[-l\de, l\de]}<\ep\},
\]
where the norm is approximated with \eqref{eq:discnormE}. Then,
we simply obtain $\cD$ as $\cD=[-k\de, k\de]$.


\section{Computational considerations}\label{sec:considerations}
In this section, we provide some insights into the
computational implementation of the methods discussed in Section
\ref{sec:Newton}.

\subsection{Function representations}
The algorithms of the previous section for the computation of
Newton and continuation steps are written in terms of scalar,
vector, and matrix valued functions defined in $\TT^d\times\RR$.
In view of the expressions for the solutions of cohomological
equations (Section~\ref{sec:coho}), a natural representation for
these functions is as Fourier-Taylor series as given in
\eqref{eq:FT}-\eqref{eq:Fseries}. In a computer, truncated
versions of such series can be stored.

For functions $f:\TT^d\to\RR$ (resp.~$f:\TT^d\rightarrow\RR^\ell$,
$f:\TT^d\rightarrow\RR^{\ell\times m}$) and some finite multi-index set
$\cI$ with elements in $\ZZ^d$, we denote {\em Fourier
representations} of $f$ by
\[
f\sim\sum_{k\in\cI}\hat
   f_{k}e^{\bi2\pi k \te},
\]
which is given by the finite set of coefficients $\{\hat
f_k\}_{k\in\cI}$. 

As it is usual in the numerical implementation of the
parameterization method in KAM-like contexts (see
\cite{mamotreto} for an overview), it is convenient to work
also with a grid representation of the function $f$.  For some
finite multi-index set ${\rm I}$ with elements in $\NN^{d}$, we also
consider representations of $f$ of the following form
\[
   f\sim\{f_{l}\}_{l\in{\rm I}},
\]
where $f_{l}=f(\te_l)$ and $\{\te_l\}_{l\in {\rm I}}$ is an
equally spaced grid of values in $\TT^{d}$ indexed by ${\rm
I}$. Let $N_F$ be the
cardinality of the set $\{f_{l}\}_{l\in{\rm I}}$, i.e., the number of
elements of the grid of $\TT^{d}$. Then, the 
$\{\hat f_{k}\}_{k\in \cI}$ coefficients can be obtained
from the set $\{f_{l}\}_{l\in {\rm I}}$ through the Discrete Fourier
Transform (DFT) and vice-versa through the inverse DFT---at a
computational cost of $O(N_F\log N_F)$ operations. We refer to the
set of values $\{f_{l}\}_{l\in{\rm I}}$ as a {\em grid
representation} of the function $f$. 

\begin{remark}
We remark  that the Fourier coefficients used 
in the  description and the DFT coefficients used 
in the implementation are not the same. 
Nonetheless, there is a close routine connection, see
\cite{HM21,Henrici79}, and this is just a minor bookkeeping detail. 
\end{remark}

For functions $f:\TT^{d}\times\RR\to\RR$
(resp.~$f:\TT^d\times\RR\rightarrow\RR^\ell$,
$f:\TT^d\times\RR\rightarrow\RR^{\ell\times m}$) 
and some positive natural order
$N_T$, we consider {\em Fourier-Taylor} representations of $f$ of the
following form
\begin{equation}\label{eq:FT representation}
   f\sim \sum_{j=0}^{N_T} f_j(\te)s^j \sim 
   \sum_{j=0}^{N_T}\sum_{k\in\cI}\hat f_{jk}e^{\bi2\pi k\te}s^j.
\end{equation}
Numerically, functions are represented by the finite set of coefficients
$\{\hat f_{jk}\}\subalign{&0\leq j\leq N_T\\ &k\in\cI}$.
It will be convenient to also work with a grid representation of
the $f_j$ coefficients in \eqref{eq:FT representation}.
We will also consider representations of $f$ of the following form
\[
   f\sim \{f_{jl}\}\subalign{&0\leq j\leq N_T\\&l\in{\rm I}},
\]
where $f_{jl}=f_j(\te_l)$ and $\{\te_l\}_{l\in{\rm I}}$ 
is an equally spaced grid of values in $\TT^d$.
We refer to the set of coefficients $\{f_{jl}\}\subalign{&0\leq
j\leq N_T\\&l\in{\rm I}}$ as a {\em grid-Taylor} representation of $f$.
Such representation is very well suited for the computation of
product of functions with variables in $\TT^d\times\RR$ and for
the (numerical) application of the flow and its differential to
such functions. That is, for the numerical computation of
$\phi_T\comp f$ and $(\Dif\phi_T\comp f)g$ for fixed $T$ and $f, g :
\TT^d \times \RR \to \RR^{2n}$.
The computation of the flow applied to Taylor polynomials 
is often called Jet Transport.

\subsection{Jet Transport}\label{sec:JT}
In the iterative scheme of Section \ref{sec:iter}, it is
necessary to compute Fourier-Taylor series for the flow and the
variational equations applied to functions defined in
$\TT^d\times\RR$. That is, we need to compute the
coefficients $\phi_j,\Phi_j:\TT^d\to \RR^{2n}$ of
the following Fourier-Taylor representations
\begin{gather}
   \phiT\comp\W\sim\sum_{j=0}^{N_T} \phi_j(\te)
  s^j,\label{eq:JT1m}\\
  \left(\Dif\phi_T\comp\W\right) u \sim
  \sum_{j=0}^{N_T}\Phi_j(\te) s^j,\label{eq:JT2m}
\end{gather}
for some arbitrary $u:\TT^{d}\times\RR\to\RR^{2n}$. 

We note that propagating polynomials 
up to high order is the same as computing the derivatives of
an evolution. This  is a standard numerical problem 
called {\em Jet Transport}---which has attracted interest 
in celestial mechanics since it enters in several other aspects such 
as computations of normal forms, bifurcations, etc. Several 
systematic approaches and surveys can be found in
\cite{jetESA, tesisPerez, PerezPMG15,  GimenoJT}.

The Jet Transport algorithm we use is based on taking a 
standard integrator of ODE's and overload the arithmetic
operations involved in  evaluating a step with operations on polynomials.
We think of the operations as the representative of derivatives.  
For the applications to the invariance equation, we use the grid-Taylor 
representation and compute the flow map of the jet on each point.
For the CRTBP, all the operations involved are 
arithmetic, but it is well known that there are fast implementations of
many other functions $(\sin,~ \cos, ~\exp,~\log,$ elliptic
functions, etc). Some comments on our actual implementation of
the Jet Transport technique can be found in Section
\ref{sec:comments-whiskers}.

Let us elaborate on our approach. Assume we have grid-Taylor representations of
$\W$ and $u$, i.e, we have $\{W_{jl}\}\subalign{&0\leq j\leq
N_T\\&l\in{\rm I}}$ and $\{u_{jl}\}\subalign{&0\leq j\leq N_T\\&l\in {\rm I}}$.
Recall that the Fourier-Taylor representations 
\begin{align*}
   \W&\sim\sum_{j=0}^{N_T}\W_j(\te)s^j,
     & u&\sim\sum_{j=0}^{N_T} u_j(\te)s^j,
\end{align*}
are related to the grid-Taylor ones by
\begin{align*}
   \W_{jl}&=\W_j(\te_l), & u_{jl}&=u_j(\te_l),
\end{align*}
where $\W_{jl},u_{jl}\in\RR^{2n}$, and $\{\te_l\}_{l\in
{\rm I}}$ is an equally spaced grid of $\TT^{d}$ indexed by
${\rm I}$. Let us define the truncated Taylor series of $\W,u$
associated to each grid point as
\begin{align*}
   \W_{l}(s) &:= \sum_{j=0}^{N_T} W_{jl} s^j,
         & 
   u_{l}(s) &:= \sum_{j=0}^{N_T} u_{jl} s^j.
\end{align*}
The Jet Transport technique allows the propagation of
$W_{l}$ and $u_{l}$ through the flow in order to
obtain truncated Taylor series of $\phi_T\circ W$ and
$(\Dif\phi_T\comp \W)u$ associated to each grid point as
\begin{gather*}
   \phi_T\comp \W_{l}\sim \sum_{j=0}^{N_T}\phi_{jl}s^j,\\
\left(\Dif\phi_T\comp\W_{l}\right) u_{l}\sim 
\sum_{j=0}^{N_T}\Phi_{jl}s^j,
\end{gather*}
for all $l\in{\rm I}$. Therefore, we obtain
$\{\phi_{jl}\}\subalign{&0\leq j\leq N_T\\&l\in{\rm I}}$ and 
$\{\Phi_{jl}\}\subalign{&0\leq j\leq N_T\\&l\in{\rm I}}$.
That is, we obtain grid-Taylor representations for
\eqref{eq:JT1m} and \eqref{eq:JT2m}. We can recover a
Fourier-Taylor representation with the inverse DFT.

\subsection{Comments on implementations}\label{sec:comments-whiskers}
Throughout this section, assume
$d=1$---which is the case for the numerical experiments of
Section \ref{sec:numerics}---and that we have the following
Fourier-Taylor representation
\begin{equation}\label{eq:tuncW}
   \W(\te,s)\sim\sum_{j=0}^{N_T}\W_j(\te)s^j\sim
   \sum_{j=0}^{N_T}\sum_{k\in\cI} \hat \W_{jk}
   e^{\bi2\pi k\te}s^j,
\end{equation}
for some finite order $N_T$ and a finite multi-index set
$\cI\subset\ZZ^{d}$.

In the following we describe some elementary techniques 
we have found useful. 

\subsubsection{Low pass filter} 
We follow \cite{HM21,FMHM24} and implement
a low-pass filter after each iterative step of Algorithm
\ref{algo:newton}. This is, for some filtering factor
$r_f\in[\frac{1}{4},\frac{1}{2})$, we set to zero the
coefficients $\hat\W_{jk}$ for
$\abs{k}>r_f\cdot N_F$.
In order to decide the number of coefficients for the Fourier
series, i.e., $N_F$, we examine the exponential decay 
since our objects are real analytic. 
In particular, for any $\W_i$, we compute
the tail as
\[
   t(W_j)=\sum_{\abs{k}>r_t\cdot N_F}\|\hat W_{jk} \|_{\infty},
\]
where $r_t<r_f$ is some tail factor. Then, we can decide to
increase $N_F$ if we detect the tails do not decrease
sufficiently fast. See algorithm \ref{algo:implementation}.

\subsubsection{Choice of scale in the $s$ parameter} 
According to Remark \ref{re:scaling}, we can scale
the series in the coordinate $s$ by its estimated radius of
convergence for the sake of normalization. A possibility is to run
algorithm \ref{algo:newton} twice: the first to estimate the
radius of convergence and the second, after the scaling has been
done. However, if we are interested in high order
parameterizations, say up to a maximum order $N_{T_{max}}$, estimating the
radius of convergence after performing algorithm
\ref{algo:newton} to order $N_{T_{max}}$ might not give good
estimates. For this reason, we run algorithm \ref{algo:newton} 
first to order $N_{T_i}<N_{T_{max}}$. Then we estimate the radius of convergence
of the parameterization and scale the series. Then, we iterate at
order $N_{T_{i+1}}>N_{T_i}$ until the radius of convergence is close to 1
or until $N_{T_i}=N_{T_{max}}$. See algorithm \ref{algo:implementation}.

\subsubsection{Multiple Shooting}\label{sec:multiple}
To mitigate the instability of hyperbolicity in the propagation, 
 we follow
\cite{HM21,FMHM24} and implement a multiple-shooting strategy.
For some positive integer $m$,
we look for whiskered tori $\{\cW^i\}_{i=0}^{m-1}$ and
parameterizations $\{\W^i\}_{i=0}^{m-1}$, where
$\W^i:\TT^d\times\RR\to U$, such that the following system of
functional equations is satisfied
\begin{equation}\label{eq:multiple-shooting}
   \phi_{T_m}\comp\W^i-\W^{i+1}\comp\Ro_{\om_m}^{\la_m}=0,\quad
   \text{for}~ i=0,\dots,m-1,
\end{equation}
where the superscript of $W$ is defined modulo $m$, $T_m:=T/m,~
\om_m:=\om/m,$ and $\la_m:=\la^{1/m}$. 
Assuming $\abs{\la}<1$, we choose $m$ 
such that the expansion rate $\abs{\la}^{\frac{-1}{m}}$ is small
enough. For the sake of simplicity, we have described the methodology
for $m=1$---the generalization to $m>1$, albeit tedious,
presents no additional complications.  

\subsubsection{Implementation of jet transport} 
Our Jet Transport implementation substitutes the floating point
operations of a {\tt RKF78} integrator by a simpler yet similar
version of the truncated formal power series manipulator
described in Chapter 2 of \cite{mamotreto}; which is publicly
available. It does so by means of a {\tt C++} wrapper that overloads
the arithmetic operators of a {\tt RKF78 C} routine with jet
operations.  A subtle point is that the step control of the {\tt
RKF78} algorithm involves propagation by two methods and
comparing the two results. The comparison of two polynomials
requires choosing a norm in the space of polynomials. It seems to
be not a critical choice.  In our implementation, the step size
is Fehlbergh's but taking into account all the coefficients of
the jet using the $\ell_1$ norm. This is justified by the fact
that, assuming analyticity, a calculation shows that the error in
all the coefficients of the jet is of the order of the numerical
integrator in the step size.  Reference \cite{GimenoJT} shows
actually more: the jet of coefficients obtained through, e.g.,
Taylor and Runge-Kutta methods is the same (up to rounding
errors) as the one that would be obtained by integrating the
higher order  variational equations up to the order of the jets.

\subsubsection{Parallelization} 
Jet Transport (which profiling shows is the most time consuming 
part of the algorithm) is very easy to parallelize since the integration of 
an orbit can be made independently of the integration of another. 
Hence, the most computationally intensive part of our algorithm can 
be efficiently parallelized, which we do with {\tt OpenMP}.
Note that the approach generalizes straightforwardly to
overloading to polynomials in two or more variables, which is
what will be needed in the computation of higher  dimensional
whiskers. 

\subsubsection{Specification of algorithm in pseudocode} 
We conclude this section with an algorithmic description of our
actual implementation of the iterative scheme of Section
\ref{sec:iter} (Algorithm
\ref{algo:implementation}).
\begin{algorithm}
\caption{Practical implementation}\label{algo:implementation}
\raggedright
Let $\la$ be some approximate rate of contraction and
let $\W_0$ and $\W_1$ be zero and first order terms of $\W$
satisfying \eqref{eq:invW} approximately 
for some fixed $T$ and rotation vector $\om$. Let $\ep_t,
~\ep_W,~\tilde\ep$ be tolerances, $n_t,~n_{it}$ maximum number
of iterates, $\tilde r\in(0,1)$ a threshold for the radius of
convergence, $\De N_T$ an integer increment, $N_{T_i}$ an initial order
for scaling, $N_T$ a desired order, and $N_{F_{max}}$a maximum
number of Fourier coefficients.  Compute $\la$ and $\W$ at order
$N_T$ by following these steps:
\vspace{-1em}
\begin{algorithmic}[1]
   \STATE Compute the tails $t(\W_0)$ and $t(\W_1)$. If 
      $\Max{t(\W_0),\, t(\W_1)}>\ep_t$ set $N_F\leftarrow
      \Min{2N_F,\, N_{F_{max}}}$. Try up to $n_t$ times.
   \STATE \label{it:scale}
   Perform Algorithm \ref{algo:newton} at order $N_{T_i}$ until
   $\norm{E_{N_{T_i}}}<\ep_W$ or up to $n_{it}$ times. 
   \STATE Estimate the radius of convergence
   $r\approx\norm{E_{N_{T_i}}}^{-1/N_{T_i}}$.
   \STATE If $r<\tilde r$ or $r>1$, scale the series for $\W$ by
      $1/r$. Else, go to step \ref{it:newtonNmax}.
      \STATE If $N_{T_i}=N_T$, go to step \ref{it:newtonNmax}.
      Else, set $N_{T_i}\leftarrow \Min{N_{T_i}+\De N_T,\, N_T}$
      and go to step \ref{it:scale}.
   \STATE \label{it:newtonNmax}
      Perform Algorithm \ref{algo:newton} at order $N_T$
      until $\norm{E_{N_T}}<\tilde\ep$ or up to $n_{it}$ times.
      Obtain $\W$ at order $N_T$ and $\la$.
\end{algorithmic}
\end{algorithm}
\begin{remark}
In practice, since the number of correct terms after $it$
iterations is smaller than $2^{it+1}-1$, see Remark
\ref{rem:correct-terms}, we perform iterates at order
$\min\{2^{it+1}-1,N_{T_i}\}$.
\end{remark}

\section{Numerical experiments}\label{sec:numerics}
We apply our algorithms in the Hamiltonian vector field of the
Circular Restricted Three Body Problem (CRTBP). That is, we have
a Hamiltonian system describing the motion of a massless particle
under the influence of the gravitational attraction of of two
massive bodies, known as primaries, moving in circular orbits
around their common barycentre. Let $m_1$ and $m_2$ denote the
masses of the primaries and let us also define the mass parameter
$\mu:=\frac{m_2}{m_1+m_2}$. We can define a rotating frame, also
known as a synodic frame, such that the position of the primaries
is fixed in $x_1$ axis, see e.g. \cite{Szebehely67}. For a
suitable rescaling in space and time, the coordinates of the
primaries are then $(\mu,0,0)$ and $(\mu-1,0,0)$ and their period
of revolution is $2\pi$. The motion of the infinitesimal body is
then described by autonomous Hamiltonian 
\[
  H(x,p)=\frac{1}{2}(p_1^2+p_2^2+p_3^2)-x_1p_2+x_2p_1 - \frac{1-\mu}{r_1}
  -\frac{\mu}{r_2},
\]
where $x,p\in\RR^3$, $r_1^2:=(x_1-\mu)^2+x_2^2+x_3^2$, and
$r_2^2:=(x_1-\mu+1)^2+x_2^2+x_3^2$.

The CRTBP has five fixed points denoted by $L_i$ with
$i=1,2,\dots,5$. We focus on the $L_1$ point which is the one
located between the primaries. In terms of stability, the $L_1$
point is of type center$\times$center$\times$saddle. That is, 
\[
   {\rm Spec} {\rm D}X(L_1) = \{
      \bi 2\pi\omega_p^0, -\bi2\pi\omega_p^0,
      \bi 2\pi\omega_v^0, -\bi2\pi\omega_v^0,
      \lambda^0, -\lambda^0
   \},
\]
for $\omega_p^0,\omega_v^0,\lambda^0\in\RR^+$. Consequently, $L_1$
has a $4-$dimensional center manifold. Lyapunov's center theorem,
see \cite{meyer-hall-offin,siegel-moser}, guarantees the
existence of two families of periodic orbits contained in the
center manifold: the vertical and the planar Lyapunov families.
Note that there exist several other families of periodic orbits,
see e.g. \cite{2003aDiDoPa}, but we limit our case to the
classical Lyapunov families. 
The center manifold carries quasi-periodic motion defined by
$2-$dimensional invariant tori around periodic orbits with a
central part. The quasi-periodic orbits around the Lyapunov
orbits are commonly known as Lissajous orbits---these orbits
correspond to partially hyperbolic invariant tori with rank$-1$
whiskers which will be our case study. More specifically, we
focus on the family of Lissajous tori around the vertical
Lyapunov family.

\subsection{The Lissajous family}\label{sec:KWHM21}
In order to identify each torus of the Lissajous family, we
follow \cite{GomezM01,HM21} and define the rotation number,
denoted by $\rho$, as follows.  Let
$\hat\omega=(\omega_p,\omega_v)$ denote the frequencies of any
$2$-dimensional torus $\hat\cK$ in the Lissajous family
and let $h$ denote the value of the Hamiltonian for any point
$p\in\hat\cK$.
If we denote by $h^0$ the Hamiltonian of the $L_1$ point, we take
$\hat\omega=(\omega_p,\omega_v)$ such that
$\omega_p\rightarrow\omega_p^0$ and
$\omega_v\rightarrow\omega_v^0$ when $h\rightarrow h^0$. Then, we
define the rotation number as
\[
	\rho:=\frac{\omega_p}{\omega_v} -1.
\]
Each torus of the Lissajous family can be uniquely identified by
their value of $h$ and $\rho$. 

For the computation of the Lissajous family of whiskered tori we
have two main options: to perform continuations of families for
fixed $\rho$ following Section \ref{sec:cont} or to compute tori
from approximate solutions of \eqref{eq:invW} following the
iterative scheme of Section \ref{sec:iter}. For the later
approach, we need to have approximate parameterizations. To this
end, we follow the approach of \cite{HM21} and compute
parameterizations of invariant tori and their invariant bundles
for the Lissajous family---that is, we have the terms $W_0$
and $W_1$ the Fourier-Taylor expansion of $\W$.
The tori obtained are included in
Figure \ref{fig:KW}, where the color bar labels the
number of Fourier coefficients used for the terms
$W_0$ and $W_1$.

\begin{figure}[htbp]
\includegraphics{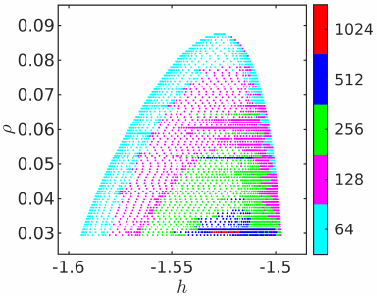}
  \caption{Energy-rotation number representation of the Lissajous tori
  around the $L_1$ point computed following the approach of \cite{HM21}.
  The colorbar labels the number of Fourier coefficients of the
  parameterizations.}
  \label{fig:KW}
\end{figure}
\begin{remark}
   We emphasize that albeit in Figure \ref{fig:KW} we represent
   the total number of Fourier coefficients used to compute tori
   and their bundles, the effective number of coefficients is
   smaller because of the low-pass filter implemented to prevent
   numerical instabilities, see Section
   \ref{sec:comments-whiskers}.
\end{remark}

\subsubsection{Computation of Lissajous whiskers}\label{eq:numerics-whisker}
In this section, we apply Algorithm \ref{algo:implementation}
to each of the 8684 tori obtained for Figure \ref{fig:KW}. Recall that in
the previous section, we obtained the terms $W_0$ and $W_1$ that
we can use as seed for the iterative procedure.
We first compute parameterizations up to order $N_T=10, 12, 16,
20, 50$ with $\ep_t=10^{-9},~\ep_W=\tilde\ep=10^{-4},~n_{t}=1,
~n_{it}=7,~\tilde r=0.9,~\De N_T=10$, and $N_{F_{max}}=1024$. 
See Algorithm \ref{algo:implementation}. For the
multiple-shooting strategy, we take $m=4$, see Section
\ref{sec:multiple}.

In order to assess the quality of the parameterizations, 
we construct an observable $d_{max}$ to measure pointwise the 
maximum distance in phase space (up to the discretization) between tori
and the limit of validity of their whiskers parameterizations
in the corresponding fiber. That is, we consider
\begin{equation}
   \label{eq:dist}
   d_{max}=\max_{\subalign{l&\in {\rm I}\\ 0\leq &j\leq p}}
   \|\W(\theta_l,s_j)-\W(\theta_l,0)\|_2,
\end{equation}
where $\{\te\}_{l\in{\rm I}}$ is an equally spaced grid of $\TT^d$, 
${\rm I}$ is some finite multi-index set with elements in
$\NN^d$, $p$ is some positive integer,  $s_0=-D, s_p=D$, 
$s_j<s_{j+1}$, and $\cD = [-D, D]$ the fundamental domain of
$\W$. For the estimates of the fundamental domain, we tested the
two methods described in Section \ref{sec:domain}. That is, using
the upper bound \eqref{eq:estD} and by requiring \eqref{eq:tol}
to hold using \eqref{eq:discnormE}, for $\ep=10^{-7}$ in both
cases. We obtained very similar results, being the estimate provided by 
\eqref{eq:estD} slightly larger. This shows that the main source
of error is the truncation error, which we cannot control except by increasing the order. 
In what follows, we use criterion \eqref{eq:tol} with the
approximation \eqref{eq:discnormE} to estimate the fundamental
domain.

We include in Figure \ref{fig:dist-fix-orders} the observable
$d_{max}$ in the energy-rotation number representation for
different orders $N_T$.
We have also computed the observable $d_{max}$ when we use the linear
approximation of the whiskers, i.e., when $N_T=1$. The results are
not included in the Figure since in this case
\(d_{max}\) is \(\cO(10^{-4})\). Furthermore, we also computed
the observable $d_{max}$ projecting $\W$ into configuration
space. For $N_T=30$, for instance, we obtain typical values around
$0.12$ and top values around $0.14$. We emphasize that $d_{max}$
reflects the validity of the parameterizations in phase space
whereas $D$ reflects the validity in the coordinates of the
parameterizations---which depend on the scaling of $W_1$.
\begin{figure}[htbp]
\includegraphics{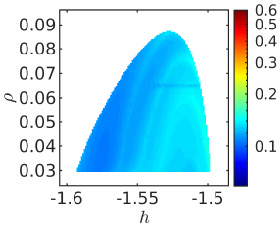}
\includegraphics{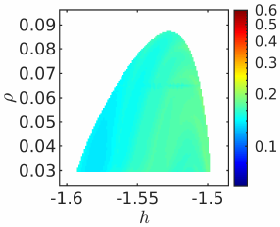}
\includegraphics{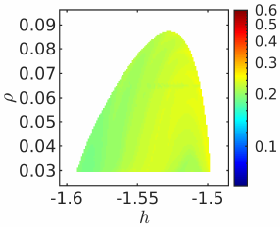}
\includegraphics{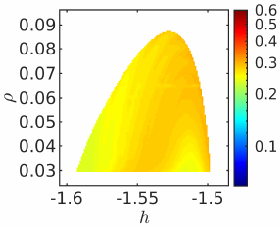}
\includegraphics{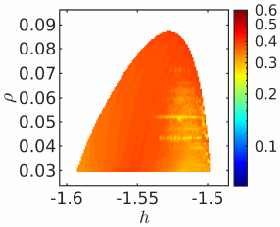}
\includegraphics{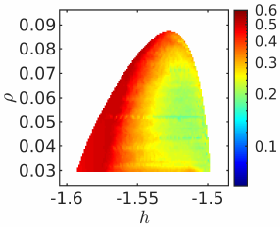}
\caption{Observable $d_{max}$ for
      fixed orders (from left to right) $N_T=10, 12, 16, 20, 30,
   50$. Colorbar in logarithmic scale.} \label{fig:dist-fix-orders}
\end{figure}
We can observe that increasing the order of the parameterizations
from $N_T=10$ to $N_T=30$ results in larger domains for almost all tori.
However, if we increase the order to $N_T=50$, there are tori for which we
obtain larger domains but, in certain region of the
energy-rotation number representation, we obtain worse
parameterizations. In other words, increasing the order of the
parameterizations does not always result in larger fundamental
domains. It is then clear that, due to numerical instabilities,
if we want to maximize the fundamental domain for each tori, we
need to choose suitable orders for each parameterization.

Let us now explore individual truncation orders and set a maximum
order for the parameterizations $N_{T_{max}}=50$.  For the tori
obtained  at order $50$ in Figure \ref{fig:dist-fix-orders}, we
truncate the series as follows. For each torus,
we consider Fourier-Taylor representations for the error,
defined in \eqref{eq:error}, at order $N_{T_{max}}$. That is, we
consider
\[
   E(\te,s) \sim  \sum_{j=0}^{N_{T_{max}}}E_j(\te)s^j.
\]
Then, we truncate the parameterizations for each torus at the
largest order $N_T\leq N_{T_{max}}$ for which
\begin{equation}\label{eq:trunc-crit}
   \norm{E_j}<\tol
\end{equation}
holds for all $j\leq N_T$ and some threshold $\tol>0$. We run tests for
$\tol=10^{-4},~1$ and
include the orders obtained in Figure \ref{fig:maxO} and the
observable $d_{max}$ in Figure \ref{fig:maxD}. Note that in the
truncated series for $E$, the term $E_j$ contributes to the
error as $\norm{E_j} D^j$ where $D<1$.
\begin{figure}[htbp]
\includegraphics{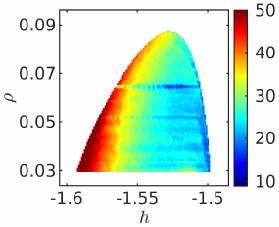}
\includegraphics{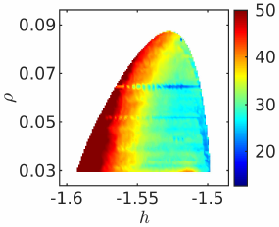}
\caption{Truncation order for $\tol=10^{-4}$ (left) and $\tol=1$
(right).}
\label{fig:maxO}
\end{figure}

\begin{figure}[htbp]
\includegraphics{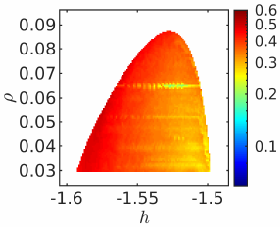}
\includegraphics{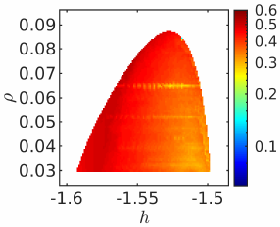}
\caption{Observable $d_{max}$ for $tol=10^{-4}$ (left) and $tol=1$
   (right). Colorbar in logarithmic scale.}
\label{fig:maxD}
\end{figure}
We can observe that, with this approach, we improve the results
obtained in Figure \ref{fig:dist-fix-orders}: for most tori, we
obtain values of the observable $d_{max}$ larger that the
obtained for the fixed order results of Figure
\ref{fig:dist-fix-orders}. This shows that each torus in the
energy-rotation number representation requires different number of
Fourier coefficients and different truncation orders in their
Fourier-Taylor parameterizations. Recall that the number of
Fourier coefficients was determined by the seed that we used for the
computation of whiskers coming from \cite{HM21} and from their
tails, see Algorithm \ref{algo:implementation}.

As a last experiment, we proceed to compute the optimal orders of
the parameterizations via direct computation. More specifically,
we use the parameterizations obtained in Figure
\ref{fig:dist-fix-orders} at fixed order $N_{T_{max}}=50$. Then, 
for some $N_T\leq N_{T_{max}}$ and $j=1,\dots,N_T$, 
we construct increasing sequences for the fundamental domains in phase
space of truncated parameterizations at order $j$ according to
Section \ref{sec:domain}. That is, we construct the
sequences $\{d^j_{max}\}_{1\leq j\leq N_T}$ where $N_T$
is the largest integer for which $\{d^j_{max}\}_{1\leq j\leq N_T}$
is strictly increasing. Then, we take this $N_T$ as the order that
optimizes the fundamental domain of the parameterizations.
The results are included in Figure \ref{fig:optimal-orders-dist},
where we can observe that the results are rather similar to
those obtained in Figures \ref{fig:maxO}-\ref{fig:maxD} using
criterion \eqref{eq:trunc-crit} for $\tol=1$.
Lastly, we include a contour plot in Figure \ref{fig:maxD-cont}
for the observable $d_{max}$ obtained with criterion
\eqref{eq:trunc-crit} and via direct computation. It is then clear that criterion
\eqref{eq:trunc-crit} with $\tol=1$ is a suitable choice for
choosing truncation orders that maximize the fundamental domains
in phase space. It is worth mentioning that the computation of
the optimal orders via direct computation cannot deal with folds
since we are only constructing the sequences
$\{d^j_{max}\}_{1\leq j\leq N_T}$. We do not, in general, expect
folds locally close to the invariant torus. If necessary, one
could instead inspect the computationally more expensive
sequences $\{d^j_{max}\}_{1\leq j\leq N_{T_{max}}}$ if folds are
expected.

\begin{figure}[htbp]
\includegraphics{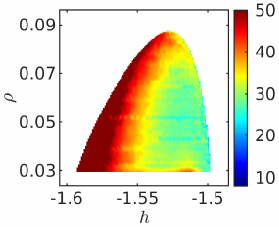}
\includegraphics{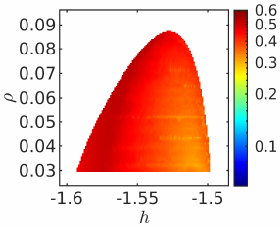}
   \caption{Optimal orders (left) and maximum distance (right)
   obtained via direct computation.}
\label{fig:optimal-orders-dist}
\end{figure}

\begin{figure}[htbp]
\includegraphics{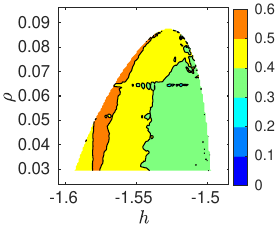}
\includegraphics{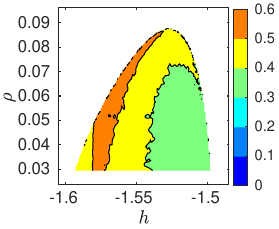}
\caption{Observable $d_{max}$ obtained from criterion
   \eqref{eq:trunc-crit} for $tol=1$ (left) and optimal distance
   obtained via direct computation(right).}
\label{fig:maxD-cont}
\end{figure}

We end this section with an illustration of the computing times
of the procedure. We run Algorithm \ref{algo:implementation} but
without scaling the bundle, i.e., we run only the steps $1)$ and
$2)$ at $N_{T_i} = N_T$. We take the torus labeled by $h = -1.5319$ and
$\rho = 0.0723$ and compute its stable whisker for different
numbers of Fourier coefficients and at different orders, see
Table \ref{tab:comp-times}. We include the error of
invariance in the torus ($\|E_0\|$), the bundle ($\|E_1\|$), the last term of the
Fourier-Taylor representation of $E$ ($\|E_{N_T}\|$), the limit of the fundamental
domain in the coordinates of the parameterization (denoted by
$D$), the observable $d_{max}$, the number of iterative steps,
and the computing time.  The computations are done in
a machine under Debian 12 with two processors Intel(R) Xeon(R)
CPU E5-2630 0 @ 2.30GHz with 6 cores each and two threads per
core for a total of 24 threads.

We first observe that when for $N_F=64$, we do not have enough Fourier
coefficients to obtain parameterizations satisfying the invariance equation
with an error smaller than $\cO(10^{-8})$. Also, note that while
$\norm{E_{N_T}}$
grows with $N_T$, its contribution to the error is
$\norm{E_{N_T}}D^{N_T}$.  Therefore,
large values of $\norm{E_{N_T}}$ do not result in large errors. 
It simply means 
that $D$ needs to be sufficiently small to guarantee an accurate description of
the parameterizations in the fundamental domain of $\W$. Also, we have observed
that the computational time grows linearly with $N_F$. This is not surprising
since a profiling of the code reveals that $98.4 \%$ of the total time is
numerical integration. Regarding the computing time behaviour with the order,
from an operation count standpoint, a quadratic increase of computing time with
order should be expected. Profiling tests have also shown that quadratic growth
is not observed in Table \ref{tab:comp-times} (indeed, the growth of 
the time seems less than linear in $N_T$)  because of the computational
effort required by the book-keeping and memory management related to the
manipulation of power series in the Jet Transport implementation.

\def\arraystretch{1.5}   
\begin{table}[ht!]
\centering
\caption{Computational times and errors}
\label{tab:comp-times}
\begin{tabular}{c c c c c c c c c}\hline
   $N_F$ & $N_T$ &$\norm{E_0}$ &$\norm{E_1}$ & $\norm{E_{N_T}}$& $D$ &
$d_{max}$ & it & $t$\\
\hline\hline
\multirow{6}{1em}{$64$} & $10$ & $2.3\cdot10^{-8}$ &
$6.5\cdot10^{-8}$ & $2.6\cdot 10^{-6}$ & $0.333$ & $0.122$ &
$5$ & $2.68$s \\
 & $12$ & $2.3\cdot10^{-8}$ & $6.5\cdot10^{-8}$ &
$3.5\cdot10^{-6}$ & $0.424$ & $0.162$ & $5$ & $2.76$s \\
& $16$ & $2.3\cdot10^{-8}$ & $6.5\cdot10^{-8}$ &
$2.8\cdot10^{-5}$ & $0.534$ & $0.214$ & $5$ & $3.05$s \\
& $20$ & $2.3\cdot10^{-8}$ & $6.5\cdot10^{-8}$ &
$4.7\cdot10^{-6}$ & $0.567$ & $0.231$ & $6$ & $3.85$s \\
& $30$ & $2.3\cdot10^{-8}$ & $6.5\cdot10^{-8}$ &
$3.4\cdot10^{-1}$ & $0.552$ & $0.223$ & $7$ & $5.49$s \\
& $50$ & $2.3\cdot10^{-8}$ & $6.5\cdot10^{-8}$ &
$2.0\cdot10^9$ & $0.457$ & $0.177$ & $7$ & $7.15$s \\\hline
\multirow{6}{1em}{$256$} & $10$ & $1.4\cdot10^{-14}$ &
$1.1\cdot10^{-14}$ & $1.1\cdot10^{-13}$ & $0.335$ & $0.123$ &
$5$ & $9.90$s \\
 & $12$ & $1.4\cdot10^{-14}$ & $1.1\cdot10^{-14}$ &
$1.3\cdot10^{-12}$ & $0.427$ & $0.163$ & $5$ & $10.25$s \\
 & $16$ & $1.4\cdot10^{-14}$ & $1.1\cdot10^{-14}$ &
$1.2\cdot10^{-9}$ & $0.557$ & $0.226$ & $5$ & $11.02$s \\
& $20$ & $1.6\cdot10^{-14}$ & $1.3\cdot10^{-14}$ &
$1.0\cdot10^{-13}$ & $0.657$ & $0.279$ & $6$ & $14.25$s \\
& $30$ & $1.6\cdot10^{-14}$ & $1.3\cdot10^{-14}$ &
$1.4\cdot10^{-8}$ & $0.839$ & $0.383$ & $6$ & $16.02$s \\
& $50$ & $1.6\cdot10^{-14}$ & $1.3\cdot10^{-14}$ &
$3.7\cdot10^{-6}$ & $0.692$ & $0.298$ & $6$ & $19.30$s\\\hline
\end{tabular}
\end{table}


\bibliographystyle{alpha}
\bibliography{references} 

\end{document}